\documentclass[a4paper, 12pt]{article}
\usepackage{authblk}
\usepackage{hyperref}

\title{Quantum $K$-theory of Lagrangian Grassmannian via parabolic Peterson isomorphism
\footnotetext{Key words and phrases: Schubert calculus, Lagrangian Grassmannian, Quantum $K$-theory, affine Grassmannian}
\footnotetext{Mathematics Subject Classification 2020: Primary 14M15; Secondary 53D45.}
} 
\author[1]{Takeshi Ikeda\footnote{e-mail: \url{gakuikeda@waseda.jp}}}
\affil[1]{
Department of Pure and Applied Mathematics,
Faculty of Science and Engineering, 
Waseda University
 3-4-1 Okubo, Shinjuku-ku, Tokyo 169-8555, Japan
}
\author[1]{Takafumi Kouno\footnote{e-mail: \url{t.kouno@aoni.waseda.jp}}}
\author[1]{Yusuke Nakayama\footnote{e-mail: \url{yusuke216144@akane.waseda.jp}}}
\author[2]{\\ Kohei Yamaguchi\footnote{e-mail: \url{yamaguchi.kohei@math.nagoya-u.ac.jp}}}
\affil[2]{
Graduate School of Mathematics, Nagoya University, Furo-cho, Chikusa-ku, Nagoya 464-8602, Japan
}

\date{}

\usepackage[margin=30truemm]{geometry}

\usepackage[mathscr]{eucal}

\usepackage{amsmath, amssymb}
\allowdisplaybreaks[1]
\usepackage{amsthm}
\usepackage{mathtools}
\mathtoolsset{showonlyrefs}
\usepackage{enumerate}
\usepackage{ytableau}
\ytableausetup{boxsize=0.4em}

\numberwithin{equation}{section}

\theoremstyle{plain}
\newtheorem{thm}{Theorem}[section]
\newtheorem{prop}[thm]{Proposition}
\newtheorem{lem}[thm]{Lemma}
\newtheorem{cor}[thm]{Corollary}

\theoremstyle{remark}
\newtheorem{rem}[thm]{Remark}

\newcommand{\alv}{\alpha^\vee}
\newcommand{\BZ}{\mathbb{Z}}
\newcommand{\BC}{\mathbb{C}}

\newcommand{\CO}{\mathcal{O}}
\newcommand{\SP}{\mathscr{SP}}
\newcommand{\eps}{\varepsilon}
\newcommand{\vpi}{\varpi}
\newcommand{\vpiv}{\varpi^{\vee}}

\newcommand{\Gr}{\mathrm{Gr}}
\newcommand{\GrG}{\Gr_{G}}
\newcommand{\GrC}{\Gr_{C_n}}
\newcommand{\la}{\lambda}
\newcommand{\JLG}{J_{\LG}}
\newcommand{\JLGh}{J_{\LG}^{\mathrm{hom}}}

\newcommand{\KGr}{K_{\ast}^{T}(\GrG)}
\newcommand{\KGrC}{K_{\ast}^{T}(\GrC)} 
\newcommand{\Sp}{\mathrm{Sp}_{2n}}
\newcommand{\LG}{\mathrm{LG}(n)}
\newcommand{\LGrm}{\mathrm{LG}} 
\newcommand{\PC}{\mathscr{P}_{C}^{n}} 
\newcommand{\Pet}{{\Phi_{P}}}

\newcommand{\af}{\mathrm{af}}
\newcommand{\poly}{\mathrm{poly}}
\newcommand{\loc}{\mathrm{loc}}

\newcommand{\mcr}[1]{\lfloor #1 \rfloor}
\newcommand{\bra}[1]{[\![ #1 ]\!]}

\DeclareMathOperator{\kernel}{Ker}

\newcommand{\map}[5][\rightarrow]{%
\begin{gathered} #2 \\ #4 \end{gathered} \ %
\begin{gathered} #1 \\ \mapsto \end{gathered} \ %
\begin{gathered} #3 \\ #5 \end{gathered}}

\newcommand{\mapname}[1]{%
\begin{gathered} #1 : \\ \ \end{gathered} \ }

\begin{document}

\maketitle

\begin{abstract}
We study Schubert calculus in the torus-equivariant quantum $K$-ring of the Lagrangian Grassmannian $\LG$. Our main tool is the $K$-theoretic Peterson map due to Kato.
The map is from the (localized) equivariant $K$-homology ring $K_*^T(\Gr_G)$ of the affine Grassmannian $\Gr_G$ of the symplectic group $G=\Sp(\BC)$ to the (localized)  torus-equivariant quantum $K$-ring $QK_T(\LG).$
We determine explicitly the kernel of this map. 
\end{abstract}

\section{Introduction}
Let $\LG$ be the \emph{Lagrangian Grassmannian} of maximal isotropic subspaces of $\BC^{2n}$ equipped with a non-degenerate skew-symmetric bilinear form. The space 
$\LG$ can be realized as a homogeneous space $G/P$ of the symplectic group $G={\Sp}
(\BC)$ where $P$ is the maximal 
parabolic subgroup corresponding to the long simple 
root $\alpha_n$.
Let $T$ be the maximal torus of $G$ contained in $P.$
In this paper, we study the $T$-equivariant (small) quantum $K$-ring
$QK_T(\LG).$ 

Let $R(T)$ be the representation ring of $T$, and $Q$ be an indeterminate called the Novikov variable.
The $T$-equivariant quantum $K$-ring 
$QK_T(\LG)$ (see \S\ref{sec:parabolic_K-Peterson})
is a commutative
$R(T)[Q]$-algebra
with 
$R(T)[Q]$-free basis $\{\CO^\la_{\LGrm}\}$
consisting of 
the classes of structure sheaves of the Schubert varieties in 
$\LG$
indexed by the set $\SP(n)$ of the $n$-bounded 
strict partitions $\la$. 
We are interested in 
the Schubert calculus of 
$QK_T(\LG)$, i.e., the combinatorial study of 
the multiplicative structure constants 
of $QK_T(\LG)$ with respect to the Schubert basis $\{\CO^\la_{\LGrm}\}$. 

The aim of this paper is to relate the  ring $QK_T(\LG)$ with the $T$-equivariant $K$-homology ring 
$K_*^T(\Gr_G)$ of the affine Grassmannian of the symplectic group.
$K_*^T(\Gr_G)$ has a Schubert basis
in such a way that Schubert calculi 
of those rings are also nicely related.
We apply ``quantum equals affine'' phenomena 
to $QK_T(\LG)$. The principle was established by Lam--Shimozono \cite{LS} for the $T$-equivariant quantum cohomology $QH_T^*(G/B)$ of the flag variety $G/B$ and the $T$-equivariant homology ring $H_*^T(\GrG)$ of the affine Grassmannian of $G.$ In fact, the so-called the Peterson isomorphism give an isomorphism between these rings up to certain 
localizations.
The Peterson isomorphism in the $K$-theory setting was studied in \cite{IIM} for type $A$ flag variety in non-equivariant case. In \cite{LLMS} a conjecture for a $K$-Peterson isomorphism was formulated in general. The conjecture was affirmatively solved by Kato \cite{K1}, and by Chow--Leung \cite{CL} by a different method.
In \cite{K1} and \cite{CL}, they also 
provide a \textit{parabolic} version of $K$-theoretic 
Peterson map that works for general $G/P.$
In the parabolic case, 
the map 
is not injective in general, although it is surjective. 
The kernel of the map is not known explicitly in general. The main result of this paper is to give an explicit description of the kernel in the case of $G/P=\LG.$

The Schubert varieties of $\GrG$ for $G=\Sp(\BC)$ is 
indexed by 
the set $\PC$ defined by 
\begin{equation}
\mathscr{P}^n_C := \left\{ \lambda = (\lambda_{1}, \ldots, \lambda_{l}) \ \middle| \ \begin{gathered} 2n \ge \lambda_{1} \ge \cdots \ge \lambda_{l} > 0 \\ \lambda_{k} < n \ \Rightarrow \ \lambda_{k} > \lambda_{k+1} \end{gathered} \right\}. \label{eq:PC}
\end{equation}
In fact, the set $\PC$ is
in bijection with the set $W_\af^0$ of 
minimal length coset representative for $W_\af/W$, where $W_\af$ and $W$ are the affine Weyl group and the Weyl group $G$ respectively.

One notes that 
$\SP(n)$ is naturally embedded in 
$\mathscr{P}^n_C$ as the subset 
consisting of $\la\in \mathscr{P}^n_C$
such that $\la_1\le n.$
This observation is basic in our study. 
Define the ideal 
\begin{equation} \label{eq:def_JLG}
\JLG := (\CO_{\lambda}^\Gr - \CO_{(n+1)}^\Gr\CO_{\lambda^{-}}^\Gr \mid \lambda \in \PC, \ \lambda_{1} \ge n+1) 
\end{equation}
of $K_*^T(\GrC),$
where $\la^{-}$
is the partition obtained from $\la$ by removing the first row of $\la,$ i.e., $\la^{-}=(\la_2,\ldots,\la_l)$ when $\la = (\la_{1}, \ldots, \la_{l})$. 
We denote by $(K_*^T(\GrC)/\JLG)_\loc$ the localization of the quotient ring $K_*^T(\GrC)/\JLG$ by the multiplicative system generated by
the image of $\CO_{(n+1)}^{\Gr}$.
We regard the localized ring $(\KGrC / \JLG)_\loc$
as an $R(T)[Q,Q^{-1}]$-algebra so that $Q$
acts by multiplication of $(\CO_{(n+1)}^{\Gr})^{-1}.$
For $\lambda \in \PC$, we denote by $\lambda^{\le n} \in \SP(n)$ the partition obtained from $\lambda$ by removing all parts of $\lambda \in \PC$ greater than $n$. 
For $\lambda = (\lambda_{1}, \ldots, \lambda_{l}) \in \SP(n)$, set $\lambda^* := (n+1-\lambda_{l}, \ldots, n+1-\lambda_{1}) \in \SP(n)$. 

\begin{thm} \label{thm:A}
There exists an isomorphism of $R(T)[Q,Q^{-1}]$-algebras \label{thm:parabolic_Peterson_intro}
\begin{align}
\left(
K_*^T(\GrC)/\JLG
\right)_\loc
&\rightarrow
QK^\poly_T(\LG)_\loc,\\
\CO^\Gr_\la
\left(\CO^\Gr_{(n+1)}\right)^{-k}
&\mapsto Q^{k-\ell(\la)}\CO^{(\la^{\le n})^*}_{\LG}
\end{align}
where $k\in \BZ$, and $\ell(\lambda)$ is the length of $\lambda \in \PC$.
\end{thm}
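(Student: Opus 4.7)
The plan is to deduce the isomorphism from the general parabolic $K$-Peterson map of Kato~\cite{K1} (see also \cite{CL}). That construction provides a surjective $R(T)$-algebra homomorphism
\[
\Pet: \KGrC_{\loc} \to QK_T(\LG)_{\loc}
\]
after localization at a specific multiplicative set of affine Schubert classes. The first task is to identify the Novikov parameter $Q$ of $QK_T(\LG)$ with a class in $\KGrC$. Since $P$ corresponds to the single long simple root $\alpha_n$, there is one Novikov variable, and the general theory dictates that it is the inverse of the image under $\Pet$ of the affine Schubert class attached to the translation by the fundamental coweight $\vpiv_n$. I would verify, via the combinatorial bijection $W_\af^0 \leftrightarrow \PC$, that this translation element corresponds to the one-row partition $(n+1)$, so that $\Pet(\CO^\Gr_{(n+1)}) = Q^{-1}$. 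This makes the localization at $\CO^\Gr_{(n+1)}$ compatible with inverting $Q$, and endows $\KGrC_{\loc}$ with the promised $R(T)[Q,Q^{-1}]$-algebra structure.

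Next, I would establish the evaluation
\[
\Pet(\CO^\Gr_\mu) = Q^{-\ell(\mu)}\CO^{\mu^*}_{\LGrm} \quad \text{for every } \mu \in \SP(n),
\]
by restricting Kato's map to the subset of affine Schubert classes indexed by $n$-bounded strict partitions and matching them with the quantum Schubert classes via the involution $\mu \mapsto \mu^*$ induced by the longest element of $W/W_P$. Granting this, for any $\lambda \in \PC$ with $\lambda_1 \geq n+1$ we have $(\lambda^-)^{\leq n} = \lambda^{\leq n}$ and $\ell(\lambda^-) = \ell(\lambda) - 1$, whence
\[
\Pet(\CO^\Gr_{(n+1)}\CO^\Gr_{\lambda^-}) = Q^{-1}\cdot Q^{-\ell(\lambda^-)}\CO^{(\lambda^{\leq n})^*}_{\LGrm} = Q^{-\ell(\lambda)}\CO^{(\lambda^{\leq n})^*}_{\LGrm}.
\]
A parallel computation in the parabolic $K$-Peterson map yields the same value for $\Pet(\CO^\Gr_\lambda)$, so $\CO^\Gr_\lambda - \CO^\Gr_{(n+1)}\CO^\Gr_{\lambda^-} \in \ker \Pet$ and therefore $\JLG \subseteq \ker \Pet$. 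Hence $\Pet$ descends to a surjective $R(T)[Q,Q^{-1}]$-algebra homomorphism $\bar{\Pet}: (\KGrC/\JLG)_{\loc} \to QK^{\poly}_T(\LG)_{\loc}$.

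Finally, to upgrade surjectivity to bijectivity I would argue by rank. Iterating the defining relations of $\JLG$, every $\CO^\Gr_\lambda$ reduces modulo $\JLG$ to $(\CO^\Gr_{(n+1)})^{\ell(\lambda)-\ell(\lambda^{\leq n})}\CO^\Gr_{\lambda^{\leq n}}$, so $(\KGrC/\JLG)_{\loc}$ is spanned as an $R(T)[Q,Q^{-1}]$-module by the set $\{\CO^\Gr_\mu : \mu \in \SP(n)\}$, of cardinality $|\SP(n)|$. On the target side, $QK^{\poly}_T(\LG)_{\loc}$ is a free $R(T)[Q,Q^{-1}]$-module of rank $|\SP(n)|$ with Schubert basis $\{\CO^{\mu^*}_{\LGrm}\}_{\mu\in\SP(n)}$, and $\bar{\Pet}$ sends $\CO^\Gr_\mu$ to the unit multiple $Q^{-\ell(\mu)}\CO^{\mu^*}_{\LGrm}$ of the corresponding basis element. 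A surjective $R(T)[Q,Q^{-1}]$-linear map whose source is spanned by as many elements as the rank of the target, mapped bijectively to a basis up to units, must be an isomorphism. The principal obstacle is the explicit evaluation formula for $\Pet$ on $\SP(n)$-indexed affine Schubert classes; this is where the detailed structure of Kato's construction and the type-$C$ affine combinatorics must be brought to bear.
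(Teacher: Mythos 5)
Your overall strategy coincides with the paper's: compute $\Pet(\CO^\Gr_{(n+1)})=Q^{-1}$, show $\JLG\subseteq\kernel(\Pet)$, descend and localize, and conclude bijectivity by matching a spanning set of the source against the Schubert basis of the target (that final counting argument is correct as you state it). However, there is a genuine gap exactly where you flag the ``principal obstacle'': the evaluation $\Pet(\CO^\Gr_\lambda)=Q^{-\ell(\lambda)}\CO^{(\lambda^{\le n})^*}_{\LGrm}$ is not a formal consequence of the general parabolic $K$-Peterson theorem but is the actual content of the result. Theorem~\ref{thm:parabolic_peterson_surj} only gives $\Pet$ on classes written in the form $\CO^\Gr_{wt_\xi}(\CO^\Gr_{t_\gamma})^{-1}\mapsto Q^{[\xi-\gamma]}\CO^{\mcr{w}}_{G/P}$, so to evaluate it on $\CO^\Gr_\lambda$ one must first produce the decomposition $x_\lambda=v(\lambda)t_{-\xi(\lambda)}$ explicitly and then prove two nontrivial combinatorial facts: (i) $[\xi(\lambda)]=\ell(\lambda)\alpha_n^\vee$ for \emph{all} $\lambda\in\PC$, which requires a careful case analysis according to whether $\lambda_1\le n+1$ or $\lambda_1\ge n+2$ (in the latter case $\xi(\lambda)$ is a sum of several fundamental coweights and one must track which $\eps_i$ is added at each step); and (ii) $\mcr{v(\lambda)}=u_{\lambda^*}$ for $\lambda\in\SP(n)$, together with $\mcr{v(\lambda)}=\mcr{v(\lambda^-)}$ when $\lambda_1\ge n+1$ (which follows from $v_{\lambda_1}\in W_P$ in that range). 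These items occupy all of Section~\ref{sec:0-Grassmannian} of the paper, and none of them is supplied in your argument.

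Two further points. First, your second paragraph states the evaluation formula only for $\mu\in\SP(n)$, but the kernel computation needs it (or at least the two facts above) for arbitrary elements of $\PC$: in $\CO^\Gr_\lambda-\CO^\Gr_{(n+1)}\CO^\Gr_{\lambda^-}$ the partition $\lambda^-$ need not lie in $\SP(n)$, so ``a parallel computation yields the same value for $\Pet(\CO^\Gr_\lambda)$'' is precisely the unproved general statement. Second, the identification of $\CO^\Gr_{(n+1)}$ with ``the translation by the fundamental coweight $\vpiv_n$'' is not literally correct: $\vpiv_n=\tfrac12(\eps_1+\cdots+\eps_n)\notin Q^\vee$, and $x_{(n+1)}=v_{n+1}t_{-\eps_1}$ is not a translation element. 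What actually makes Lemma~\ref{lem:Pet n+1} work is that $\mcr{v_{n+1}}=1$ and $[\eps_1]=\alpha_n^\vee$. Both issues are repairable, but as written the proposal reduces the theorem to its hardest ingredient without proving it.
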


Let us show an example of applying the isomorphism of the theorem. 
Unfortunately, 
explicit calculations
in the Schubert classes in 
$K_*^T(\GrC)$
are not much available in the existing literature. So
let us start with the Chevalley formula (\cite[Theorem~49]{LNS}) for $QK_{T}(\mathrm{Sp}_{4}(\BC)/B)$, which 
is translated into the following equation
by the $K$-Peterson isomorphism
    \begin{equation}
        \CO_{\ydiagram{3,2}}^{\Gr} \CO_{\ydiagram{2,1}}^{\Gr} = (1-e^{-2(\alpha_{1}+\alpha_{2})}) \CO_{\ydiagram{3,3,2,1}}^{\Gr} + e^{-2(\alpha_{1}+\alpha_{2})}\CO_{\ydiagram{3,3,2}}^{\Gr},
    \end{equation}
    where $\alpha_1,\alpha_2$ are the simple roots of type $C_2$ so that $\alpha_2$ is long.
    We have the following congruence 
    with respect to $J_{\LGrm(2)}$:
    \begin{equation}
    \CO_{\ydiagram{3,2}}^{\Gr}\equiv
    \CO_{\ydiagram{3}}^{\Gr}  \CO_{\ydiagram{2}}^{\Gr},\quad
    \CO_{\ydiagram{3,3,2,1}}^{\Gr}\equiv
      (\CO_{\ydiagram{3}}^{\Gr})^2
     \CO_{\ydiagram{2,1}}^{\Gr},\quad 
    \CO_{\ydiagram{3,3,2}}^{\Gr}\equiv
      (\CO_{\ydiagram{3}}^{\Gr})^2
     \CO_{\ydiagram{2}}^{\Gr}.
    \end{equation}
    By applying the isomorphism in Theorem~\ref{thm:A}, we obtain 
    \begin{equation}
        \begin{split}
            & Q^{-1} \left( Q^{-1}\CO_{\LGrm}^{\ydiagram{1}} \right) \left( Q^{-2}\CO_{\LGrm}^{\ydiagram{2,1+1}} \right) \\ 
            &= (1-e^{-2(\alpha_{1}+\alpha_{2})}) Q^{-2} \left( Q^{-2}\CO_{\LGrm}^{\ydiagram{2,1+1}} \right) + e^{-2(\alpha_{1}+\alpha_{2})} Q^{-2} \left( Q^{-1}\CO_{\LGrm}^{\ydiagram{1}} \right), 
        \end{split}
    \end{equation}
    in $QK_{T}(\LGrm(2))_{\loc}$, where
    we use shifted Young diagrams for
    $\lambda\in \SP(2).$ Thus we have 
    \begin{equation}
        \CO_{\LGrm}^{\ydiagram{1}} \CO_{\LGrm}^{\ydiagram{2,1+1}} = (1-e^{-2(\alpha_{1}+\alpha_{2})}) \CO_{\LGrm}^{\ydiagram{2,1+1}} + e^{-2(\alpha_{1}+\alpha_{2})} Q\, \CO_{\LGrm}^{\ydiagram{1}}. \label{eq:LG_product}
    \end{equation}
    This is the Chevalley formula in $QK_{T}(\LGrm(2))$ described in \cite{BCMP} or \cite{KLNS}. 

Note that in $K_*^T(\Gr_{C_{4}})$, we have
  \begin{align}
        \CO_{\ydiagram{2}}^{\Gr} \CO_{\ydiagram{2,1}}^{\Gr} &=e^{-2(\alpha_{1}+\alpha_{2})} \CO_{\ydiagram{3,2}}^{\Gr}
        +(e^{-\alpha_1-\alpha_2}+e^{-\alpha_2})(1-e^{-\alpha_1-\alpha_2})
        \CO_{\ydiagram{3,2,1}}^{\Gr} 
        \\
        &+(1-e^{-\alpha_2})(1-e^{-\alpha_{1}-\alpha_{2}})  \CO_{\ydiagram{4,2,1}}^{\Gr} .
    \end{align}
    This can be directly verified by using 
    results in \cite{IISY}.
    One sees that 
    this equation leads to \eqref{eq:LG_product}.

It is reasonable to 
state the 
corresponding
result in homology case.
Let $H_*^T(\GrC)$ 
and $QH^*_{T}(\LG)$ be
the equivariant homology ring
of $\GrC$ and 
the equivariant quantum cohomology ring of $\LG$ respectively.
$QH^*_{T}(\LG)$ 
is a commutative $H_T^*(pt)[q]$-algebra with a quantum parameter $q$.
For $\la\in \PC$, we denote by $\sigma_{\la}^{\Gr}$ be the 
Schubert class in  $H_*^T(\GrC)$. 
For $\la\in \SP(n)$, we denote by $\sigma^{\la}_{\LGrm}$ be the 
Schubert class in  $QH_T^*(\LG)$. 
Let 
$\JLGh$ be the $S$-span of 
$\sigma_\lambda^\Gr$ such that $\lambda_1\ge n+2$
in $H_*^T(\GrC)$, which  is known to be 
an ideal 
of $H_*^T(\GrC).$
We denote by $(H_*^T(\GrC)/\JLGh)_\loc$ the localization of $H_*^T(\GrC)/\JLGh$ by the multiplicative system generated by
the image of $\sigma_{(n+1)}^{\Gr}$. 
We define $QH^*_{T}(\LG)_{\loc}
=QH^*_{T}(\LG)[q^{-1}].$
The following is an immediate consequence of Theorem \ref{thm:A}. See \S \ref{sec:quantum_cohomology} for details.
\begin{cor}[cf. \cite{LS}] \label{cor:homology}
There exists an isomorphism of $H_T^*(pt)[q,q^{-1}]$-algebras 
\begin{align}
(H_*^T({\GrC})/\JLGh)_\loc
&\rightarrow
QH^*_{T}(\LG)_{\loc},\\
\sigma_{\lambda}^{\Gr} 
(\sigma_{(n+1)}^{\Gr})^{-k}&\mapsto\begin{cases}
    q^{k-\ell(\lambda)}
    \sigma_{\LGrm}^{(\lambda^{\le n})^*} & \text{if $\la_1=n+1$}\\
    0 & \text{if $\la_1\ge n+2$}
\end{cases}.\label{eq:homology-map}
\end{align}
\end{cor}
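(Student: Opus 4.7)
The plan is to derive Corollary~\ref{cor:homology} as the classical limit of Theorem~\ref{thm:A} by passing to associated graded rings with respect to natural decreasing filtrations. On $\KGrC$, filtering by the length $\ell$ of the Schubert index yields $\mathrm{gr}\,\KGrC\cong H_*^T(\GrC)$ with $\CO^{\Gr}_\lambda\mapsto\sigma^{\Gr}_\lambda$; on $QK_T(\LG)$, the analogous filtration, with the Novikov variable $Q$ placed in the degree corresponding to the minimal degree of a rational curve through a point in $\LG$, yields the associated graded $QH^*_T(\LG)$ with $\CO^\lambda_{\LG}\mapsto\sigma^\lambda_{\LG}$ and $Q\mapsto q$. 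The first task is to verify that the Peterson map of Theorem~\ref{thm:A}, given on generators by $\CO^{\Gr}_\lambda(\CO^{\Gr}_{(n+1)})^{-k}\mapsto Q^{k-\ell(\lambda)}\CO^{(\lambda^{\leq n})^*}_{\LG}$, is strictly filtration-preserving; this is immediate from a degree count once $\deg Q=n+1$ is fixed, so that both sides have degree $\ell(\lambda)-k(n+1)$.

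The central step is to show that $\mathrm{gr}(\JLG)=\JLGh$. For a generator $\CO^{\Gr}_\lambda-\CO^{\Gr}_{(n+1)}\CO^{\Gr}_{\lambda^-}$ of $\JLG$ with $\lambda_1\geq n+1$, its leading term in the associated graded is $\sigma^{\Gr}_\lambda-\sigma^{\Gr}_{(n+1)}\cdot\sigma^{\Gr}_{\lambda^-}$. Using the Pieri-type product for $\sigma^{\Gr}_{(n+1)}$ in $H_*^T(\GrC)$, one checks that this difference lies in $\JLGh$ when $\lambda_1=n+1$ (all correction terms have first part $\geq n+2$), and that $\sigma^{\Gr}_{(n+1)}\sigma^{\Gr}_{\lambda^-}$ itself lies in $\JLGh$ when $\lambda_1\geq n+2$. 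Hence $\mathrm{gr}(\JLG)\subseteq\JLGh$; a Schubert-basis rank count, together with the freeness of both localized quotients on classes indexed by $\SP(n)$ guaranteed by Theorem~\ref{thm:A}, forces equality. Compatibility with localization is automatic since $\CO^{\Gr}_{(n+1)}$ has leading term $\sigma^{\Gr}_{(n+1)}$.

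Passing to the associated graded of the K-theoretic Peterson isomorphism then produces a bijective $H_T^*(pt)[q,q^{-1}]$-algebra map $(H_*^T(\GrC)/\JLGh)_\loc\to QH^*_T(\LG)_\loc$. Reading off leading terms of the image formula: for $\lambda_1\leq n+1$, the leading term of $Q^{k-\ell(\lambda)}\CO^{(\lambda^{\leq n})^*}_{\LG}$ is $q^{k-\ell(\lambda)}\sigma^{(\lambda^{\leq n})^*}_{\LG}$; for $\lambda_1\geq n+2$, the source $\sigma^{\Gr}_\lambda$ already lies in $\JLGh$ and so maps to zero. This recovers~\eqref{eq:homology-map}. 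The main obstacle is the Pieri step: verifying combinatorially that all correction terms in $\sigma^{\Gr}_{(n+1)}\cdot\sigma^{\Gr}_{\lambda^-}$ have first part $\geq n+2$. I would tackle this directly via the type $C$ affine Pieri formula, or, should that calculation prove cumbersome, bypass it by invoking the cohomological parabolic Peterson isomorphism in the spirit of \cite{LS} to supply the needed vanishing on the cohomology side and then use the Schubert-basis rank count to conclude $\mathrm{gr}(\JLG)=\JLGh$.
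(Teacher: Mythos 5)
Your overall strategy --- pass to associated graded rings for the length filtration, identify $\mathrm{gr}(\JLG)$ with $\JLGh$, and read off the map on symbols --- is exactly the route the paper takes: its Proposition~\ref{prop:gr-K-to-H} is precisely your ``central step''. However, the way you argue that step has the two inclusions running the wrong way, and the Pieri computation you lean on is both unnecessary and doubtful. Computing the symbols of a generating set of $\JLG$ can only bound $\mathrm{gr}(\JLG)$ from \emph{below}: the associated graded of an ideal is in general strictly larger than the ideal generated by the symbols of its generators, so ``the generators' leading terms lie in $\JLGh$'' does not yield $\mathrm{gr}(\JLG)\subseteq\JLGh$. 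What the generators do give, immediately and with no Pieri formula, is the opposite inclusion $\mathrm{gr}(\JLG)\supseteq\JLGh$: for $\la_1\ge n+2$ the product $\CO^\Gr_{(n+1)}\CO^\Gr_{\la^{-}}$ lies in filtration degree $n+1+|\la^{-}|<|\la|$, so the symbol of $\CO^\Gr_\la-\CO^\Gr_{(n+1)}\CO^\Gr_{\la^{-}}$ is just $\sigma^\Gr_\la$, and these classes span $\JLGh$ by definition. (Your claim that for $\la_1=n+1$ all correction terms in $\sigma^\Gr_{(n+1)}\sigma^\Gr_{\la^{-}}$ have first part $\ge n+2$ is also suspect in the equivariant setting, where lower-degree terms $\sigma^\Gr_\mu$ with positive-degree $S$-coefficients occur and need not have $\mu_1\ge n+2$; fortunately nothing in the argument requires it.)

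The inclusion that actually needs proof, $\mathrm{gr}(\JLG)\subseteq\JLGh$, is exactly the freeness/rank argument you append at the end, and that is how the paper closes the gap: it exhibits $\hat{K}_*^T(\Gr_G)/\hat{J}_{\LG}$ explicitly as a free module on the classes $\overline{\CO^\Gr_\la}\,(\overline{\CO^\Gr_{(n+1)}})^i$ with $\la\in\SP(n)$, so the classes of $\CO^\Gr_\la$ with $\la_1\le n+1$ stay linearly independent in each graded piece and no further relations enter $\mathrm{gr}(\JLG)$. Two further cautions. First, your fallback of importing the needed vanishing from the Lam--Shimozono parabolic Peterson isomorphism is circular relative to this paper: the identification of $W_\af^0\cap(W^P)_\af$ with $\{\la\in\PC:\la_1\le n+1\}$ is itself deduced from Corollary~\ref{cor:homology} (see Corollary~\ref{cor:Petco}), so you would have to verify the Peterson-coset criterion combinatorially from scratch. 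Second, to make ``the associated graded of $K_*^T(\Gr_G)$ is $H_*^T(\Gr_G)$'' literally true over the equivariant base one must first complete $R(T)$ to $\hat{S}=\BC\bra{\eps_1,\ldots,\eps_n}$ and build the $\mathfrak{m}$-adic filtration into the coefficients, as the paper does; the naive length filtration over $R(T)$ does not produce $H_T^*(pt)$ in degree zero.
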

Note that 
the isomorphism is 
degree reversing with 
$\deg(\sigma_\la^\Gr)=|\la|$ for affine side, and $\deg(q)=n+1,\;
\deg(\sigma_{\mathrm{LG}}^\la)=|\la|$
for quantum side. We also remark that a presentation for $QH^*_{T}(\LG)$ in terms of the factorial $Q$-functions 
was obtained in \cite{IMN} (we do not know the exact connection between the two presentations). Actually in the early stage of this project, we tried to generalize \cite{IMN} to $K$-theory setting using the (factorial) $GQ$-functions (\cite{IN}) so that $QK(\LG)$ is 
presented as a quotient ring of 
the ring of $K$-theoretic $Q$-functions. 
It turns out that the complete description of the ideal of relations is quite hard to get.

According to Peterson \cite{Pet} (see also \cite{LS}), there is an ideal $J_{P}^{\mathrm{Pet}} \subset H_{\ast}^{T}(\GrC)$ such that $(H_{\ast}^{T}(\GrC)/J_{P}^{\mathrm{Pet}})_{\loc} \simeq QH_{T}^{\ast}(\LG)_{\loc}$, which is defined in terms of a subset $(W^P)_\af$ of
of $W_\af$ called the ``Peterson's coset representatives'' of $W_{\af}$ (see \S \ref{ssec:Pet}). 
We can deduce that the subset $\{\la\in \PC\mid \la_1\le n+1\}$ of $\PC$ is in bijection with the set $W_\af^0\cap (W^P)_\af$ (Corollary \ref{cor:Petco}).

Now that we have Theorem \ref{thm:A}, it would be desirable to develop combinatorics of $K_*^T(\GrC)$ further. 
In relation with this problem, there is
the dual $P$-Schur functions $gp_\la(y)$ indexed by the strict partitions $\la$
 introduced by Nakagawa and Naruse \cite{NN}. 
 The set of functions $\{gp_\la\}$ is defined as the dual basis to the $GQ$-functions $\{GQ_\la\}$ introduced by Ikeda--Naruse \cite{IN} that is identified with the Schubert basis of the $K$-ring of the Lagrangian Grassmannian $\mathrm{LG}(\infty)$ of infinite rank.  
 It is notable that further properties of the function $gp_\la$  have been intensively studied by Lewis--Marberg \cite{LM}, and Iwao \cite{Iw}. 
It is natural to expect that there is the affine (double) version of the $gp$-functions indexed by $\la\in\PC$ such that it is identified with the Schubert class of $K_*^T(\GrC).$ 
These issues will be pursued in upcoming work \cite{IISY}. 

It is natural to study the quantum $K$-ring of other homogeneous 
variety via the parabolic Peterson isomorphism. Our particular interest in this regard is the cominuscule $G/P.$
In fact, Peterson, Lam--Shimozono showed the equivariant quantum cohomology ring of cominuscule $G/P$ is nicely related to 
the equivariant homology of the affine Grassmannian $\GrG$ (\cite[Theorem 10.21, Proposition 11.3]{LS}). 
For instance, Cookmayer--Mili\'cevi\'c \cite{CM}
studied the quantum cohomology of the ordinary Grassmannian in this context.

This paper is organized as follows. 
In Section~\ref{sec:parabolic_K-Peterson}, we review the parabolic $K$-Peterson isormophism. 
In Section~\ref{sec:0-Grassmannian}, we describe some required combinatorial facts for the set of
affine Grassmannian elements in type $C$. 
In Section~\ref{sec:proof}, we give a proof of Theorem~\ref{thm:A}, which is the main result of this paper. 
In Section~\ref{sec:quantum_cohomology}, we compare our $K$-Peterson isomorphism with the isomorphism for (co)homology rings established by Peterson and Lam--Shimozono
and describe the set of Peterson's coset representatives in terms of partitions.

\subsection*{Acknowledgments}
We are grateful to Satoshi Naito for helpful discussion. 
We used "Equivariant Schubert Calculator" (\url{https://sites.math.rutgers.edu/~asbuch/equivcalc/}), developed by A.S.~Buch, to compute examples. 
T.K. was partly supported by JSPS Grants-in-Aid for Scientific Research 20J12058, 22J00874, and 22KJ2908. 
K.Y. is supported by JSPS Fellowships for Young Scientists No. 22J11816.
T.I. was partly supported by JSPS Grants-in-Aid for Scientific Research  23H01075,
22K03239,
20K03571,
20H00119.


\section{\texorpdfstring{Parabolic $K$-Peterson isomorphism}{Parabolic K-Peterson isomorphism}} \label{sec:parabolic_K-Peterson}

In this section, we review 
the parabolic Peterson homomorphism in $K$-theory due to 
Kato \cite{K1} and Chow--Leung \cite{CL}.
We refer to \cite{LSS} for the basic properties of the $T$-equivarinat $K$-theory
of the affine Grassmannian.

Let $G$ be a connected
simply-connected linear algebraic group over $\BC$. We fix a maximal torus $T$ of $G$ and a Borel subgroup $B$ containing $T$. Let $P$ be a parabolic subgroup $G$ containing $B$. The coset space $G/P$ has a structure of non-singular 
projective variety.
We denote by $R(T)$ the representation ring of $T$. 

Let $I$ be the index set of the Dynkin diagram of $G$, and $J$ be the subset 
corresponding to the parabolic subgroup $P.$ 
Let $\{\alv_i\mid i \in I\}$ be the set of simple coroots, $Q^{\vee} = \sum_{i \in I} \BZ \alv_i$ the coroot lattice of $G$, $Q^{\vee, +} = \sum_{i \in I} \BZ_{\ge 0} \alpha_{i}^{\vee}$ the positive part of $Q^{\vee}$. 
Let $J$ be the subset of $I$ corresponding to $P$ and set $Q_{P}^{\vee} := \sum_{i \in J} \BZ \alpha_{i}^{\vee}$, $Q_{P}^{\vee, +} := \sum_{i \in J} \BZ_{\ge 0} \alpha_{i}^{\vee}$. For each $\xi = \sum_{i \in I} c_{i} \alpha_{i}^{\vee} \in Q^{\vee}$, set $[\xi] := \sum_{i \in I \setminus J} c_{i} \alpha_{i}^{\vee}$. Note that $[\xi]$ is a representative of the coset $\xi + Q_{P}^{\vee}$ in the quotient $Q^{\vee}/Q_{P}^{\vee}$. 

Let $W$ be the Weyl group of $(G,T)$ with 
the simple reflections $s_{i}$, $i \in I$, as generators. 
Define the subgroup $W_{P}:= \langle s_{i} \mid i \in J \rangle$ of $W$. 
Let $W^P$ be the minimal length coset representatives for $W/W_P.$
For $w \in W$, we denote by $\mcr{w}$ the element of $W^P$ corresponding to  $wW_P\in W/W_{P}$.  
We set $R(T)\bra{Q}= R(T)\bra{Q_{i} \mid i \in I \setminus J}$. 
The $T$-equivariant quantum $K$-ring $QK_{T}(G/P)$ is defined as an $R(T)$-module by 
\begin{equation}
    QK_{T}(G/P) := R(T)\bra{Q} \otimes_{R(T)}K_{T}(G/P) , 
\end{equation}
where $K_{T}(G/P)$ denotes the Grothendieck groups of the $T$-equivariant vector bundles on $G/P$. 
There is a ring structure of $QK_{T}(G/P)$ defined by Givental \cite{Giv} and Lee \cite{Lee}.

Let $B^{-}$ be a Borel subgroup of $G$ such that $B\cap B_{-}=T.$
For $w \in W^P$, we define the  \emph{Schubert variety} $X^{w}$, which is the closure of the $B^{-}$-orbit of the $T$-fixed point $G/P$ corresponding to $w \in W^{P}$. 
We denote by $\CO^{w}_{G/P}$, $w \in W$, the structure sheaf of $X^{w}$. 
We denote by $[\CO^{w}_{G/P}] \in K_{T}(G/P)$ the class of the sheaf $\CO^{w}_{G/P}$ and we call it the \emph{Schubert class} corresponding to $w$. 
By abuse of notation, we simply
denote $[\CO^{w}_{G/P}]$ by 
$\CO^{w}_{G/P}.$ 
Then $\{\CO^{w}_{G/P} \mid w \in W^{P}\}$ forms an $R(T)\bra{Q}$-basis of $QK_{T}(G/P)$. 
According to Anderson-Chen-Tseng \cite{ACT}, Buch-Chaput-Mihalcea-Perrin \cite{BCMP}, Kato \cite{K1}, and Chow-Leung \cite{CL}, 
the $R(T)[Q]$-span $QK_{T}^{\poly}(G/P)$ of $\CO^w_{G/P}\;(w\in W^P)$ 
is a subring of $QK_{T}(G/P)$. 

Let $W_{\af} $ the affine Weyl group of $G$ with (affine) simple reflections $s_i$ for $i\in I \sqcup \{0\}.$
We have 
$W_\af\cong W \ltimes Q^{\vee}$
$= \{ wt_{\xi} \mid w \in W, \ \xi \in Q^{\vee} \}$, where $t_{\xi}$, $\xi \in Q^{\vee}$, denotes the translation element corresponding to $\xi$. 
The $T$-equivariant $K$-homology 
$K_*^T(\Gr_G)$ of the affine Grassmannian $\Gr_G$ is a commutative  $R(T)$-algebra with an $R(T)$-basis $\{\CO_x^{\Gr}\mid x\in W_\af^0\}$ called the Schubert 
basis, where the index set of the Schubert basis of 
$K_*^T(\Gr_G)$ is the set of minimal length coset representatives $W_\af^0$ for
$W_\af/W$. 

We denote by $Q_{< 0}^{\vee}$ the set of all strictly anti-dominant elements in $Q^{\vee}$. Note that for any $\xi\in Q_{< 0}^{\vee}$ and $w\in W,$
we have $wt_\xi\in W_\af^0$; see \cite[Section~3]{LS}. 
 Let $\KGr_\loc$ be the localization of $\KGr$ by the multiplicative set $\{\CO^\Gr_{t_\gamma}\mid \gamma \in Q_{<0}^\vee\},$
where $\CO^\Gr_{t_\gamma}$ denotes the Schubert class
corresponding to $t_\gamma W\in W_\af/W\cong W_\af^0.$
We also set 
\begin{equation}
QK_{T}^{\poly}(G/P)_\loc:=QK_{T}^{\poly}(G/P)[Q_{i}^{-1} \mid i \in I \setminus J].
\end{equation}
For each $\xi = \sum_{i \in I \setminus J} c_{i}\alpha_{i}^{\vee} \in Q_{P}^{\vee}$, we define $Q^{\xi} := \prod_{i \in I \setminus J} Q_{i}^{c_{i}}$. 

\begin{thm}[{\cite[Theorem~2.22]{K2}}, {\cite[Theorem~1.1]{CL}}]\label{thm:parabolic_peterson_surj}
There is a surjective $R(T)$-algebra homomorphism
\begin{align}
\mapname{\Pet} \map[\twoheadrightarrow]{\KGr_\loc}{QK_{T}^{\poly}(G/P)_\loc,}{\CO_{wt_{\xi}}^\Gr(\CO_{t_{\gamma}}^\Gr)^{-1}}{Q^{[\xi - \gamma]} \CO_{G/P}^{\mcr{w}},}
\end{align}
 for $w \in W$ and $\xi \in Q^{\vee}$ with $wt_{\xi} \in W_{\af}^{0}$, and $\gamma \in Q_{<0}^\vee.$
\end{thm}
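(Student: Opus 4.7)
The plan is to set up $\Pet$ explicitly via its action on the affine Schubert basis as in the statement, then verify in turn that (a) the formula descends to the indicated localization, (b) surjectivity holds, and (c) the map respects multiplication. Steps (a) and (b) are essentially formal: each $\CO_{t_\gamma}^\Gr$ with $\gamma \in Q_{<0}^\vee$ is sent to $Q^{[\gamma]}$, an invertible monomial in the Novikov variables, so inverting these classes on the affine side matches inverting the $Q_i$ $(i \in I\setminus J)$ on the quantum side; and every Schubert basis element $\CO_{G/P}^v$ with $v \in W^P$ is in the image, for instance by taking $w = v$ and $\xi = \gamma \in Q_{<0}^\vee$, which yields $\CO_{wt_\xi}^\Gr(\CO_{t_\gamma}^\Gr)^{-1} \mapsto Q^{[0]}\CO_{G/P}^{v} = \CO_{G/P}^{v}$.

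The substantive step (c) is best broken into two pieces by factoring through the non-parabolic $K$-Peterson isomorphism for $G/B$. First, I would establish the surjective $R(T)$-algebra homomorphism
\begin{equation}
\Phi\colon \KGr_\loc \twoheadrightarrow QK_T^{\poly}(G/B)_\loc, \qquad \CO_{wt_\xi}^\Gr(\CO_{t_\gamma}^\Gr)^{-1} \mapsto Q^{\xi-\gamma}\CO_{G/B}^w.
\end{equation}
This is the non-parabolic $K$-Peterson theorem: Kato proves it geometrically by realizing both sides as equivariant $K$-groups attached to a common semi-infinite flag manifold (equivalently, a quasi-map space to $G/B$) and matching Schubert bases via level-zero extremal weight modules; Chow--Leung prove it combinatorially, producing $\Phi$ from the affine $K$-nilHecke algebra and verifying multiplicativity against the quantum $K$-Chevalley formula. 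Second, I would construct a ring surjection $\pi_*\colon QK_T^{\poly}(G/B)_\loc \twoheadrightarrow QK_T^{\poly}(G/P)_\loc$ modelled on the projection $G/B \to G/P$: send $\CO_{G/B}^w$ to $\CO_{G/P}^{\mcr{w}}$ when $w \in W^P$ and to $0$ otherwise, and specialize the Novikov variables $Q_j = 1$ for $j \in J$. Composing yields $\Pet = \pi_* \circ \Phi$, and the combined effect on Novikov variables is exactly $Q^{\xi-\gamma} \mapsto Q^{[\xi-\gamma]}$, matching the statement.

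The hard part is the second piece: the compatibility of $\pi_*$ with the quantum $K$-products. Classically, pushforward along the fibration $G/B\to G/P$ is an $R(T)$-module map but not a ring map, and it is not formal that the quantum $K$-structure constants on $G/B$, restricted to $W^P$-indexed classes and specialized at $Q_j = 1$ for $j \in J$, reproduce the quantum $K$-structure constants of $G/P$. Deriving this compatibility from the parabolic comparison in the semi-infinite flag variety setup is the main technical content of \cite{K2}. In the present paper, both the non-parabolic isomorphism $\Phi$ and the parabolic compatibility of $\pi_*$ would be imported as black boxes from \cite{K2,CL}, so the effective proof of the theorem consists of citing these results and unwinding the definitions to match the formula above.
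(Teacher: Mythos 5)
The paper offers no proof of this statement: it is quoted directly from \cite[Theorem~2.22]{K2} and \cite[Theorem~1.1]{CL}, so your closing observation that the effective proof consists of citing these results and unwinding definitions is exactly the paper's own stance. In that sense your proposal takes the same route as the paper.

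That said, your supplementary sketch of how the cited proof decomposes contains an internal inconsistency you should repair before offering it as an outline. You define the comparison map $\pi_{*}\colon QK_{T}^{\poly}(G/B)_{\loc} \to QK_{T}^{\poly}(G/P)_{\loc}$ by sending $\CO_{G/B}^{w}$ to $\CO_{G/P}^{\mcr{w}}$ when $w \in W^{P}$ and to $0$ otherwise. But the formula you are proving evaluates $\Pet$ on $\CO_{wt_{\xi}}^{\Gr}(\CO_{t_{\gamma}}^{\Gr})^{-1}$ for \emph{arbitrary} $w \in W$ with $wt_{\xi} \in W_{\af}^{0}$, and the prescribed output $Q^{[\xi-\gamma]}\CO_{G/P}^{\mcr{w}}$ is a nonzero basis element even when $w \notin W^{P}$; with your $\pi_{*}$ the composite $\pi_{*}\circ\Phi$ would annihilate all such classes, contradicting the statement. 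The correct comparison map --- and this is precisely the nontrivial theorem of \cite{K2} you are black-boxing --- sends $\CO_{G/B}^{w}$ to $\CO_{G/P}^{\mcr{w}}$ for \emph{every} $w \in W$, mirroring the classical $K$-theoretic pushforward along $G/B \to G/P$, which sends each Schubert structure sheaf to a Schubert structure sheaf rather than killing non-minimal representatives. A second, smaller gap: well-definedness of the assignment on $\KGr_{\loc}$ is not purely formal, since one must check that two fraction representations of the same element receive the same image; this rests on a multiplicative identity of the form $\CO_{wt_{\xi}}^{\Gr}\,\CO_{t_{\gamma}}^{\Gr} = \CO_{wt_{\xi+\gamma}}^{\Gr}$ for $\gamma$ strictly antidominant, which is itself part of the content of the cited results and should be named rather than absorbed into ``essentially formal''.
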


Note that there is a homology version of parabolic 
Peterson homomorpshim $\Phi_P^{\mathrm{hom}}$ by Lam--Shimozono \cite{LS}, and
$\kernel(\Phi_{P}^{\mathrm{hom}})$ can be 
explicitly described in terms of ``Peterson coset representatives'' (\cite[Section~10.4]{LS}).
However, no explicit description 
of $\kernel(\Pet)$ is known for general $G/P$.


\section{\texorpdfstring{Preliminaries of on $W_\af^0$ in type $C$}{Preliminaries of on Waf0 in type C}} \label{sec:0-Grassmannian}
In this section, we prove some results on the $0$-Grassmannian elements in the type $C$ affine Weyl group. 
 
 \subsection{\texorpdfstring{Combinatorial description of $W_\af^0$}{Combinatorial description of Waf0}}
Let 
 $W_\af=\langle s_0,s_1,\ldots,s_n\rangle$ be the affine Weyl group of type $C_n^{(1)}.$ The defining relations are given as follows:
 \begin{alignat}{2}
 s_i^2&=1 & \quad &\text{for $0\le i\le n$;}
 \\ 
 s_0s_1s_0s_1&=s_1s_0s_1s_0; & & \\
 s_{n-1}s_ns_{n-1}s_n&=s_ns_{n-1}s_ns_{n-1}; & & \\
s_is_{i+1}s_i&=s_{i+1}s_{i}s_{i+1} & &\text{for $1\le i\le n-2$}; \\
s_is_j&=s_js_i & &\text{for $0\le i,j\le n$ such that $|i-j|\ge 2$.}
 \end{alignat}
 Let $\ell: W_\af\to \BZ_{\ge 0}$ be the length function. The subgroup
 $W=\langle s_1,\ldots,s_n\rangle$
 is the Weyl group of type $C_n.$
 Define
 \begin{equation}
W_\af^0 :=\{w\in W_\af\;|\; 
\ell(ws_i)=\ell(w)+1\quad 
\text{for $1\le i\le n$}\}. 
 \end{equation}
It is known that the set $W_\af^0$ is a 
complete representative for the coset space $W_\af/W.$
 For each $i$ such that $1 \le i \le 2n$, there is an element 
 $\rho_{i} \in W_{\af}^0$ defined by 
\begin{equation}
\rho_{i} := \begin{cases} s_{i-1} \cdots s_{1}s_{0} & \text{if } 1 \le i \le n, \\ s_{2n-i+1} \cdots s_{n-1}s_{n}s_{n-1} \cdots s_{1}s_{0} & \text{if } n+1 \le i \le 2n. \end{cases} 
\end{equation}
Recall that
$\mathscr{P}^n_C$
is defined by 
\eqref{eq:PC}.
For any partition
$\la=(\la_1,\ldots,\la_l)$ in $\mathscr{P}^n_C$,
\begin{equation}
x_\la:=\rho_{\la_l}\cdots \rho_{\la_1}
\end{equation}
is an element of $W_\af^0$, and in fact 
$W_\af^0$ is bijective to $\mathscr{P}^n_C$ by this correspondence ({\cite[Lemma~24]{EE}}, {\cite[Lemma~5.3]{LSS}}).
Each element in $W_\af^0$ is called a \emph{$0$-Grassmannian element}.

\subsection{\texorpdfstring{Decomposition of $0$-Grassmannian elements}{Decomposition of 0-Grassmannian elements}}

Let $\{\eps_{1}, \ldots, \eps_{n}\}$ be the standard basis of the lattice $X^\vee$ of the cocharacters of $T\subset G$. 
The simple reflections $s_i,\;1\le i\le n,$
acts on $X^\vee$ by 
\begin{align}
s_i(\eps_j)&=\begin{cases}
\eps_{i+1} & \text{if $j=i$}\\
\eps_{i} & \text{if $j=i+1$}\\
\eps_j & \text{if $j\notin \{i,i+1\},$}
\end{cases}
\quad \text{for $1\le i\le n-1$, and}\\
s_n(\eps_j)&=\begin{cases}
-\eps_{n} & \text{if $j=n$}\\
\eps_{j} & \text{if $j\ne n$}
\end{cases}.
\end{align}
Recall that 
$W_\af\cong W\ltimes Q^\vee$
with 
$Q^\vee=\bigoplus_{i=1}^n\BZ \alv_i(\subset X^\vee)$ 
the coroot lattice of $G$. 
In our setting, we have 
\begin{equation}
\alv_i=\eps_i-\eps_{i+1} \text{ for } 1\le i\le n-1,\quad 
\alv_n=\eps_n.
\end{equation}
The fundamental coweights $\{\vpi_{i}^{\vee} \mid 1 \le i \le n\}$ of $G$ are given by 
\begin{equation}
\vpiv_i=\eps_1+\cdots+\eps_i \text{ for } 1 \le i \le n-1,\quad 
\vpiv_n=\frac{1}{2}
(\eps_1+\cdots+\eps_n).
\end{equation}
Note that $2\vpiv_n\in Q^\vee.$ 

For $\lambda \in \PC$, we define $v(\lambda) \in W$ and $\xi(\lambda) \in Q^{\vee}$ by 
\begin{equation}
x_{\lambda} = v(\lambda)t_{-\xi(\lambda)}. 
\end{equation}
In order to describe
$v(\lambda)$ and $\xi(\lambda)$, 
we define \begin{equation}
v_{i} := \begin{cases} s_{i} \cdots s_{n-1}s_{n}s_{n-1} \cdots s_{2}s_{1} & \text{if } 1 \le i \le n, \\ s_{2n-i} \cdots s_{2}s_{1} & \text{if } n+1 \le i \le 2n-1, \end{cases}\label{eq:defvi}
\end{equation}
and $v_{2n}=1.$
Note that $v_i=\rho_{i}\rho_{2n}^{-1}$,
and $\ell(v_i)=2n-i.$

\begin{lem}\label{lem:vrho}
$v_{i}^{-1}\rho_{i}=t_{-\eps_1}$
for any  $1\le i\le n.$
\begin{proof}
  This is verified by using 
$s_{0} = s_{\theta}t_{-\theta^{\vee}}$
and $s_{\theta} = s_{1} \cdots s_{n-1}s_{n}s_{n-1} \cdots s_{1}$, where  $\theta$ is the highest root of $G$ and 
$\theta^\vee=\eps_1$ is the corresponding coroot.  
\end{proof}
\end{lem}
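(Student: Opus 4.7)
The plan is to compute $v_i^{-1}\rho_i$ directly by reversing the explicit word for $v_i$ and looking for telescoping cancellations with the word for $\rho_i$. Since the hint at the end of the statement already reveals the two key identities to use, namely $s_0 = s_\theta t_{-\theta^\vee}$ and $s_\theta = s_1\cdots s_{n-1}s_n s_{n-1}\cdots s_1$, the work amounts to showing that the product $v_i^{-1}\rho_i$ simplifies to $s_\theta s_0$.

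First I would write out, for $1 \le i \le n$, the inverse
\begin{equation}
v_i^{-1} = s_1 s_2 \cdots s_{n-1} s_n s_{n-1} \cdots s_{i+1} s_i,
\end{equation}
obtained by reversing the defining word for $v_i$ in \eqref{eq:defvi}, and then multiply by $\rho_i = s_{i-1}\cdots s_1 s_0$ on the right. The middle block $s_i\cdot s_{i-1}\cdots s_1$ glues onto the descending tail $s_{n-1}\cdots s_{i+1}$ to produce the full descending sequence $s_{n-1}\cdots s_1$. Thus
\begin{equation}
v_i^{-1}\rho_i = (s_1 s_2 \cdots s_{n-1} s_n s_{n-1} \cdots s_1)\, s_0 = s_\theta \, s_0,
\end{equation}
using the stated reduced expression for $s_\theta$.

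Next I would substitute $s_0 = s_\theta t_{-\theta^\vee}$ and use the involution property $s_\theta^2 = 1$ to conclude
\begin{equation}
v_i^{-1}\rho_i = s_\theta \cdot s_\theta t_{-\theta^\vee} = t_{-\theta^\vee} = t_{-\varepsilon_1},
\end{equation}
since $\theta^\vee = \varepsilon_1$ in type $C_n$. This establishes the claim uniformly for all $1 \le i \le n$.

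The only point one might have to be careful about is the convention for the semidirect product action (whether the translation sits on the left or right of the finite Weyl group factor), but once this is fixed consistently with the paper's definition $W_\af \cong W \ltimes Q^\vee$ the cancellation is immediate. There is no genuine obstacle here: the lemma is essentially a bookkeeping identity that isolates the appearance of $t_{-\varepsilon_1}$ as the translation part common to every $\rho_i$ for $1 \le i \le n$, which is exactly what will be needed later to read off $v(\lambda)$ and $\xi(\lambda)$ from the decomposition $x_\lambda = \rho_{\lambda_l}\cdots \rho_{\lambda_1}$.
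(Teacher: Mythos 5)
Your proof is correct and is exactly the computation the paper's one-line proof intends: reversing the word for $v_i$, observing the tail of $v_i^{-1}$ concatenates with $s_{i-1}\cdots s_1$ to give the reduced word for $s_\theta$, and then cancelling via $s_0=s_\theta t_{-\theta^\vee}$. No issues.
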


\begin{prop} \label{prop:finite_part}
For $\lambda = (\lambda_{1}, \ldots, \lambda_{l}) \in \PC$, we have
\begin{equation}
v(\lambda) = v_{\lambda_{l}} \cdots v_{\lambda_{2}} v_{\lambda_{1}}, 
\end{equation}
and
\begin{equation}
    \xi(\lambda) = \eps_1 + v_{\lambda_{1}}^{-1}\eps_1 + v_{\lambda_{1}}^{-1} v_{\lambda_{2}}^{-1} \eps_1 + \cdots + v_{\lambda_{1}}^{-1} \cdots v_{\lambda_{l-1}}^{-1} \eps_1.
    \label{eq:zeta(la)}
\end{equation}
\end{prop}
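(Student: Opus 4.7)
The plan is to prove both identities simultaneously by induction on the length $l$ of $\lambda = (\lambda_1, \ldots, \lambda_l)$. The key observation is that the argument is purely formal: it only uses Lemma~\ref{lem:vrho} and the standard commutation relation $t_{\xi} w = w t_{w^{-1}\xi}$ in $W_{\af} = W \ltimes Q^{\vee}$, so the sequence $(\lambda_2, \ldots, \lambda_l)$ does not need to lie in $\PC$ in order to apply the inductive hypothesis to the truncated product $\rho_{\lambda_l} \cdots \rho_{\lambda_2}$.

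For the base case $l=1$, Lemma~\ref{lem:vrho} gives $x_{\lambda} = \rho_{\lambda_1} = v_{\lambda_1} t_{-\eps_1}$, so $v(\lambda) = v_{\lambda_1}$ and $\xi(\lambda) = \eps_1$, in agreement with \eqref{eq:zeta(la)} (which in this case has only the leading $\eps_1$). For the inductive step, set $\lambda^{-} = (\lambda_2, \ldots, \lambda_l)$ so that
\begin{equation}
x_{\lambda} = (\rho_{\lambda_l} \cdots \rho_{\lambda_2}) \rho_{\lambda_1} = x_{\lambda^{-}} \rho_{\lambda_1}.
\end{equation}
By the inductive hypothesis, $x_{\lambda^{-}} = v(\lambda^{-}) t_{-\xi(\lambda^{-})}$ with $v(\lambda^{-}) = v_{\lambda_l} \cdots v_{\lambda_2}$ and
\begin{equation}
\xi(\lambda^{-}) = \eps_1 + v_{\lambda_2}^{-1}\eps_1 + \cdots + v_{\lambda_2}^{-1} \cdots v_{\lambda_{l-1}}^{-1} \eps_1.
\end{equation}

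Combining this with $\rho_{\lambda_1} = v_{\lambda_1} t_{-\eps_1}$ (Lemma~\ref{lem:vrho}) and using $t_{-\xi(\lambda^{-})} v_{\lambda_1} = v_{\lambda_1} t_{-v_{\lambda_1}^{-1} \xi(\lambda^{-})}$, we compute
\begin{equation}
x_{\lambda} = v(\lambda^{-}) t_{-\xi(\lambda^{-})} v_{\lambda_1} t_{-\eps_1} = \bigl(v(\lambda^{-}) v_{\lambda_1}\bigr)\, t_{-(\eps_1 + v_{\lambda_1}^{-1} \xi(\lambda^{-}))}.
\end{equation}
Thus $v(\lambda) = v_{\lambda_l} \cdots v_{\lambda_2} v_{\lambda_1}$ and $\xi(\lambda) = \eps_1 + v_{\lambda_1}^{-1} \xi(\lambda^{-})$; expanding $\xi(\lambda^{-})$ via the inductive formula yields exactly \eqref{eq:zeta(la)}. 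Note also that $\xi(\lambda)$ indeed lies in $Q^{\vee}$, since $\eps_1 = \alv_1 + \cdots + \alv_n \in Q^{\vee}$ and $Q^{\vee}$ is $W$-stable.

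There is essentially no serious obstacle here; the only point requiring a little care is to separate the purely algebraic identity (valid for any sequence of indices) from the geometric statement that $x_{\lambda} \in W_{\af}^{0}$ for $\lambda \in \PC$, which has already been recorded and is not needed for the induction.
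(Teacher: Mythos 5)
Your proof is correct and follows essentially the same route as the paper: induction on $\ell(\lambda)$, the base case from Lemma~\ref{lem:vrho}, and the inductive step via $t_{-\xi(\lambda^{-})}v_{\lambda_1} = v_{\lambda_1}t_{-v_{\lambda_1}^{-1}\xi(\lambda^{-})}$ applied to $x_{\lambda} = x_{\lambda^{-}}\rho_{\lambda_1}$. Your aside about $\lambda^{-}$ possibly leaving $\PC$ is moot (removing the largest part preserves membership in $\PC$), but it does no harm.
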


\begin{proof}
We show the lemma by the induction on $l = \ell(\lambda)$. If $l = 1$, we have 
\begin{equation}
    x_{\lambda} = \rho_{\lambda_{1}} = v_{\lambda_{1}} (v_{\lambda_{1}}^{-1}\rho_{\lambda_{1}}) = v_{\lambda_{1}}t_{-\eps_1}
\end{equation}
by Lemma \ref{lem:vrho}.
Hence $v(\lambda) = v_{\lambda_{1}}$ and $\xi(\lambda) = \eps_1$, as desired. 

Let $l \ge 2$ and assume that the lemma holds for 
all the elements in 
$\PC$ with length less than or equal to $ l-1.$
Suppose $\lambda \in \PC$ has length $l$. 
By the inductive hypothesis, we have 
\begin{align}
    v(\lambda^{-}) &= v_{\lambda_{l}} \cdots v_{\lambda_{2}}, \\ 
    \xi(\lambda^{-}) &= \eps_1 + v_{\lambda_{2}}^{-1}\eps_1 + \cdots + v_{\lambda_{2}}^{-1} \cdots v_{\lambda_{l-1}}^{-1}\eps_1. 
\end{align}
We calculate that 
\begin{equation}
    x_{\lambda} 
    = x_{\lambda^{-}} \rho_{\lambda_{1}} 
    = (v(\lambda^{-})t_{-\xi(\lambda^{-})}) (v_{\lambda_{1}}t_{-\eps_1})
    = v(\lambda^{-})v_{\lambda_{1}} t_{-v_{\lambda_{1}}^{-1}\xi(\lambda^{-})} t_{-\eps_1
    }. 
\end{equation}
Hence we have 
\begin{align}
    v(\lambda) &= v(\lambda^{-})v_{\lambda_{1}} = v_{\lambda_{l}} \cdots v_{\lambda_{1}}, \\ 
    \xi(\lambda) &= \eps_1 + v_{\lambda_{1}}^{-1} \xi(\lambda^{-}) = \eps_1 + v_{\lambda_{1}}^{-1}\eps_1 + \cdots + v_{\lambda_{1}}^{-1} \cdots v_{\lambda_{l-1}}^{-1}\eps_1, 
\end{align}
as desired. 
\end{proof}

\subsection{\texorpdfstring{Determination of $[\xi(\la)]$ for $\la\in \PC$}{Determination of [xi(lambda)] for lambda in PnC}}We prove the following proposition.
\begin{prop}\label{prop:translation_explicit}
    For $\lambda \in \PC$, we have $[\xi(\lambda)] = \ell(\lambda)\alpha_{n}^{\vee}$. 
\end{prop}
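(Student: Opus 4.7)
The plan is to induct on $l := \ell(\la)$, applying the recursion $\xi(\la) = \eps_{1} + v_{\la_{1}}^{-1}\, \xi(\la^{-})$ already used in the proof of Proposition~\ref{prop:finite_part}; the base case $l = 0$ is trivial. A useful reduction: since $\eps_{i} = \alv_{i} + \alv_{i+1} + \cdots + \alv_{n}$ for every $i \in \{1, \ldots, n\}$, one has $\eps_{i} \equiv \alv_{n} \pmod{Q_{P}^{\vee}}$; hence for $\eta = \sum_{i=1}^{n} a_{i} \eps_{i} \in Q^{\vee}$, $[\eta] = s(\eta)\, \alv_{n}$ with $s(\eta) := \sum_{i=1}^{n} a_{i}$. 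The goal thus becomes $s(\xi(\la)) = l$.

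The main computational input is the explicit action of $v_{i}^{-1}$ on the basis $\{\eps_{j}\}$. For $n+1 \le i \le 2n$, $v_{i}^{-1} = s_{1} s_{2} \cdots s_{2n-i}$ lies in $W_{P}$ and cyclically permutes a subset of the $\eps_{j}$'s, so $s(v_{i}^{-1}\eta) = s(\eta)$. For $1 \le i \le n$, decomposing $v_{i}^{-1} = (s_{1} \cdots s_{n-1})\, s_{n}\, (s_{n-1} \cdots s_{i})$ and tracking each factor gives
\[
v_{i}^{-1}(\eps_{j}) = \begin{cases} \eps_{j+1} & 1 \le j < i, \\ -\eps_{1} & j = i, \\ \eps_{j} & i < j \le n \end{cases}
\]
(the last case vacuous if $i = n$), hence $s(v_{i}^{-1} \eta) = s(\eta) - 2\, a_{i}(\eta)$, where $a_{i}(\eta)$ denotes the $\eps_{i}$-coefficient of $\eta$.

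Combining these with the recursion and the inductive hypothesis $s(\xi(\la^{-})) = l - 1$, the step reduces to checking $s(v_{\la_{1}}^{-1} \xi(\la^{-})) = s(\xi(\la^{-}))$, which is immediate when $\la_{1} \ge n+1$. When $1 \le \la_{1} \le n$, it is equivalent to $a_{\la_{1}}(\xi(\la^{-})) = 0$; the defining condition on $\PC$ applied to $\la$ with $\la_{1} \le n$ forces $\la_{1} > \la_{2} = (\la^{-})_{1}$. So the proposition reduces to the following auxiliary claim, which I would also prove by induction on $\ell(\mu)$:

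\emph{For every $\mu \in \PC$ and every integer $j$ with $\mu_{1} < j \le n$, one has $a_{j}(\xi(\mu)) = 0$.}

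The base $\mu = \emptyset$ is trivial. For the step, since $j \ge 2$, the recursion gives $a_{j}(\xi(\mu)) = a_{j}(v_{\mu_{1}}^{-1} \xi(\mu^{-}))$. If $\mu_{1} \ge n+1$, the range of $j$ is empty. If $\mu_{1} \le n$, then $\mu_{1} < j \le n$ together with the formula above yields $v_{\mu_{1}}^{-1}(\eps_{j}) = \eps_{j}$, hence $a_{j}(v_{\mu_{1}}^{-1} \xi(\mu^{-})) = a_{j}(\xi(\mu^{-}))$; since $(\mu^{-})_{1} = \mu_{2} \le \mu_{1} < j$, the inductive hypothesis on $\mu^{-}$ gives $a_{j}(\xi(\mu^{-})) = 0$. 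The main bookkeeping obstacle is the explicit computation of $v_{i}^{-1}(\eps_{j})$ for $1 \le i \le n$, particularly the degenerate case $i = n$ where the rightmost block $s_{n-1} \cdots s_{i}$ is empty; everything else is a clean nested induction.
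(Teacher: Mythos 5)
Your argument is correct and reaches the conclusion by a genuinely different, and leaner, route than the paper. The paper proves the much more precise statement that $\xi(\la)-\xi(\la^{-})$ is always a single basis vector $\eps_i$ (Lemmas~\ref{lem:translation_n+1} and \ref{lem:translation_n+2}), which requires computing $\xi(\la)$ explicitly as a sum of fundamental coweights and a fairly delicate case analysis when $\la_1\ge n+2$; the proposition then follows because each $\eps_i\equiv\alpha_n^\vee \bmod Q_P^\vee$. You instead observe that only the coordinate sum $s(\xi(\la))$ matters, note that $v_i^{-1}$ preserves $s$ except for the sign flip $\eps_i\mapsto-\eps_1$ occurring when $i\le n$, and so reduce everything to the single vanishing statement $a_{\la_1}(\xi(\la^{-}))=0$, which your auxiliary induction handles cleanly; your explicit signed-permutation action of $v_i^{-1}$ is correct (and consistent with \eqref{eq:vpep} and \eqref{eq:veps}). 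One point to be aware of: your step ``$\la_1\le n$ forces $\la_1>\la_2$'' uses that parts \emph{equal to} $n$ cannot repeat, whereas \eqref{eq:PC} as printed only forbids repetition of parts strictly less than $n$. The printed condition must be a typo --- with it, $(n,n)$ would lie in $\PC$, the asserted embedding of $\SP(n)$ into $\PC$ would fail, and the proposition itself would be false (for $n=2$ and $\la=(2,2,2)$ one computes $\xi(\la)=\eps_2$, so $[\xi(\la)]=\alpha_2^\vee\ne 3\alpha_2^\vee$) --- and the paper's own proof of Lemma~\ref{lem:translation_n+1} tacitly assumes the corrected condition as well (it asserts $\la_{a+1}>\cdots>\la_i$ for the parts $\le n$). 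So your proof is correct for the intended definition of $\PC$, and it is worth stating that you are using the strict-decrease condition for all parts $\le n$.
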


In the following lemmas, set $\vpi_{0}^{\vee} := 0$, and the index $i$ of $\eps_i$
is taken modulo $n.$
\begin{lem}\label{lem:translation_n+1}
Assume that $\lambda = (\lambda_{1}, \ldots, \lambda_{l}) \in \PC$ satisfies $\lambda_{1} \le n+1$. Let $s, r \in \BZ$ be such that $l = sn + r$ and $0 \le r \le n-1$. Then
\begin{enumerate}[{\upshape {(}1{)}}]
\item $
\xi(\lambda) 
=2s\vpi_{n}^{\vee} + \vpi_{r}^{\vee},$
\item $\xi(\lambda) - \xi(\lambda^{-}) = \eps_{r}$. 
\end{enumerate}
\end{lem}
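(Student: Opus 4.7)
The plan is to prove (1) by induction on $l=\ell(\lambda)$; part (2) will then follow by direct subtraction, using $\vpi_r^\vee - \vpi_{r-1}^\vee = \eps_r$ when $r\ge 1$ and $2\vpi_n^\vee - \vpi_{n-1}^\vee = \eps_n$ in the wrap-around case $r=0$ (consistent with the convention $\eps_0 = \eps_n$). The base case $l=0$ is trivial, so I assume (1) for $\lambda^-$: writing $l-1 = s'n + r'$ with $0 \le r' \le n-1$, we have $\xi(\lambda^-) = 2s'\vpi_n^\vee + \vpi_{r'}^\vee$. The inductive step rests on the recursion
\begin{equation*}
\xi(\lambda) = \eps_1 + v_{\lambda_1}^{-1}\xi(\lambda^-),
\end{equation*}
which is established inside the proof of Proposition~\ref{prop:finite_part}.

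The key computational input is a description of $v_i^{-1}$ as a signed permutation of $\{\eps_1,\dots,\eps_n\}$, obtained by direct expansion of \eqref{eq:defvi}: for $1\le i \le n$, $v_i^{-1}$ sends $\eps_j \mapsto \eps_{j+1}$ for $1 \le j \le i-1$, $\eps_i \mapsto -\eps_1$, and fixes $\eps_j$ for $j\ge i+1$; for $i=n+1$, $v_{n+1}^{-1} = s_1 s_2\cdots s_{n-1}$ acts as the cyclic shift $\eps_j \mapsto \eps_{j+1}$ for $j \le n-1$ and $\eps_n \mapsto \eps_1$. The hypothesis $\lambda_1 \le n+1$ thus restricts us to exactly these two shapes of $v_{\lambda_1}^{-1}$. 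The crucial inequality is $r' < \lambda_1$: if $\lambda_1 \le n$, the strict-decrease condition in the definition of $\PC$ forces $\lambda_1 > \lambda_2 > \cdots > \lambda_l$, hence $l \le \lambda_1 \le n$, giving $s'=0$ and $r' = l-1 < \lambda_1$; if $\lambda_1 = n+1$, then trivially $r' \le n-1 < \lambda_1$.

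Once $r' < \lambda_1$ is in hand, $v_{\lambda_1}^{-1}(\vpi_{r'}^\vee) = \eps_2 + \cdots + \eps_{r'+1}$ by the shift rule, and $v_{\lambda_1}^{-1}(2s'\vpi_n^\vee) = 2s'\vpi_n^\vee$ (trivially when $s'=0$, and by the cyclic invariance of $\eps_1+\cdots+\eps_n$ when $\lambda_1 = n+1$). Adding $\eps_1$ yields
\begin{equation*}
\xi(\lambda) = 2s'\vpi_n^\vee + (\eps_1 + \eps_2 + \cdots + \eps_{r'+1}),
\end{equation*}
which matches $2s\vpi_n^\vee + \vpi_r^\vee$ in the two sub-cases $r'+1 \le n-1$ (take $s=s'$, $r=r'+1$) and $r'+1 = n$ (take $s=s'+1$, $r=0$, using $\eps_1+\cdots+\eps_n = 2\vpi_n^\vee$). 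The main obstacle I anticipate is precisely this bookkeeping, and especially establishing $r' < \lambda_1$ when $\lambda_1 \le n$: without it the term $v_{\lambda_1}^{-1}(\eps_{\lambda_1}) = -\eps_1$ would enter the sum and destroy the clean form of $\xi(\lambda)$, so the defining condition of $\PC$ is used essentially.
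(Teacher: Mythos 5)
Your proof is correct. The engine is the same as the paper's, namely the fact that $v_p^{-1}$ shifts $\eps_j$ to $\eps_{j+1}$ for $j<p$ (the paper's equation \eqref{eq:vpep}), but the organization is genuinely different: the paper evaluates each term $v_{\lambda_1}^{-1}\cdots v_{\lambda_i}^{-1}\eps_1=\eps_{i+1}$ of the telescoping sum from Proposition~\ref{prop:finite_part} individually (splitting off the block of parts equal to $n+1$) and then adds everything up, whereas you run an induction on $\ell(\lambda)$ through the recursion $\xi(\lambda)=\eps_1+v_{\lambda_1}^{-1}\xi(\lambda^{-})$ and track how $v_{\lambda_1}^{-1}$ acts on the entire coweight $2s'\vpi_n^{\vee}+\vpi_{r'}^{\vee}$. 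I checked your signed-permutation description of $v_i^{-1}$ and the case analysis ($r'+1\le n-1$ versus $r'+1=n$, and $s'=0$ versus $\lambda_1=n+1$), and they are all correct; in particular your observation that $\lambda_1\le n$ forces $s'=0$ is exactly what makes the two cases exhaustive. Your route has two advantages: it is structurally parallel to the paper's proof of the companion Lemma~\ref{lem:translation_n+2} (so the two regimes $\lambda_1\le n+1$ and $\lambda_1\ge n+2$ become one uniform induction), and it makes part~(2) genuinely explicit, including the wrap-around case $r=0$ where $\xi(\lambda)-\xi(\lambda^{-})=2\vpi_n^{\vee}-\vpi_{n-1}^{\vee}=\eps_n=\eps_0$, whereas the paper only says that (2) follows from (1).

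One caveat, which is really a defect of the paper's definition \eqref{eq:PC} rather than of your argument. Your key inequality $r'<\lambda_1$ in the case $\lambda_1\le n$ rests on all parts being strictly decreasing, hence $l\le\lambda_1$. As literally written, \eqref{eq:PC} only imposes strictness on parts $<n$, so it would admit $\lambda=(n,n,\dots)$, for which both your argument and the lemma itself fail: for $n=2$ one has $x_{(2,2,2)}=(s_1s_0)^3=s_0s_1\notin W_\af^0$ and $\xi((2,2,2))=\eps_2\ne 2\vpi_2^{\vee}+\vpi_1^{\vee}$. The condition must be read as $\lambda_k\le n\Rightarrow\lambda_k>\lambda_{k+1}$; this is what the bijection $\PC\cong W_\af^0$ and the identification of $\{\lambda\in\PC\mid\lambda_1\le n\}$ with $\SP(n)$ require, and the paper's own proof uses the same strictness when it asserts $\lambda_{a+1}>\cdots>\lambda_i$. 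With that reading, your proof is complete.
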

\begin{proof}
\noindent (1) 
We will show that
\begin{equation}
v_{\lambda_{1}}^{-1} \cdots v_{\lambda_{i}}^{-1} \eps_{1} = \eps_{i+1}\label{eq:epep}
\end{equation}
for $1\le i\le l-1$. Then 
it follows that from \eqref{eq:epep}  \begin{align}
 \xi(\lambda) 
&=\eps_1+\sum_{i=1}^{l-1}
v_{\la_1}^{-1}\cdots v_{\la_i}^{-1}(\eps_1) & & \text{by Proposition \ref{prop:finite_part}}\\
&=\sum_{i=0}^{l-1}\eps_{i+1} & & \text{by \eqref{eq:epep}}\\
&= s(\eps_{1} + \cdots + \eps_{n}) + (\eps_{1} + \cdots + \eps_{r}) \\ 
 &= 2s \vpi_{n}^{\vee} + \vpi_{r}^{\vee}, 
\end{align}
as desired. 

Now let us show \eqref{eq:epep}. 
It is easy to verify that if $1\le i< p\le n+1$, then we have
\begin{equation}
v_{p}^{-1}\eps_i=\eps_{i+1}.
\label{eq:vpep}
\end{equation}
Let $a$ be the maximal non-negative integer such that $\lambda_{a} = n+1.$ 
Then because 
$\lambda_{a+1}>\cdots>\lambda_i>1,$ we can apply \eqref{eq:vpep} to obtain
\begin{equation}
v_{\lambda_{a+1}}^{-1} \cdots v_{\lambda_{i}}^{-1} \eps_{1} = \eps_{i-a+1}.
\end{equation}
Now note that $i-a+1 \le n$ 
holds because $\lambda_{a+1}>\cdots>\lambda_i>1.$
Then, by applying equation~\eqref{eq:vpep} again, we have 
\begin{equation}
v_{\lambda_{1}}^{-1} \cdots v_{\lambda_{a}}^{-1} v_{\lambda_{a+1}}^{-1} \cdots v_{\lambda_{i}}^{-1} \eps_{1} = v_{n+1}^{-a} \eps_{i-a+1} = \eps_{i+1},\label{eq:xi(la)}
\end{equation}
as desired.

(2) follows from (1). 
\end{proof}

\begin{lem} \label{lem:translation_n+2}
Assume $\la\in \mathscr{P}_C^n$ satisfies $\la_1\ge n+2.$ Then there exists $1\le i\le n-1$ such that 
\begin{equation}
\xi(\la)-\xi(\la^{-})=\eps_i.
\end{equation}
\end{lem}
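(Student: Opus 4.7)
The plan is to analyze the recursion $\xi(\lambda) = \eps_1 + v_{\lambda_1}^{-1}\xi(\lambda^-)$ from Proposition~\ref{prop:finite_part} using an explicit description of $v_{\lambda_1}^{-1}$. Because $\lambda_1 \ge n+2$, the reduced word $v_{\lambda_1} = s_{2n-\lambda_1}\cdots s_1$ does not involve $s_n$, so $v_{\lambda_1}^{-1}$ acts on $\{\eps_1,\ldots,\eps_n\}$ by a signless permutation; a direct computation identifies this as the cyclic rotation $\eps_1 \mapsto \eps_2 \mapsto \cdots \mapsto \eps_{k+1} \mapsto \eps_1$ on the first $k+1$ basis vectors, fixing the rest, where $k := 2n - \lambda_1 \in \{0, 1, \ldots, n-2\}$. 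Writing $\xi(\lambda^-) = \sum_{j=1}^{n} c_j \eps_j$, substituting into the recursion yields
\begin{equation}
\xi(\lambda) - \xi(\lambda^-) = \bigl(1 - (c_1 - c_{k+1})\bigr)\eps_1 + \sum_{j=2}^{k+1}(c_{j-1} - c_j)\eps_j.
\end{equation}

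The lemma then reduces to the following key property of $\xi(\lambda^-)$: the initial segment $c_1, c_2, \ldots, c_{k+1}$ is non-increasing and satisfies $c_1 - c_{k+1} \le 1$. Indeed, once this is granted, every coefficient on the right-hand side is a non-negative integer, and since their sum telescopes to $(1 - (c_1 - c_{k+1})) + (c_1 - c_{k+1}) = 1$, exactly one of them equals $1$ and the rest vanish; hence $\xi(\lambda) - \xi(\lambda^-) = \eps_i$ for some $1 \le i \le k+1 \le n-1$, as desired.

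I would establish the key property as the instance $\mu = \lambda^-$, $p = \lambda_1$ of the following strengthening, proved by induction on $\ell(\mu)$: \emph{for every $\mu \in \mathscr{P}^n_C$ and every integer $p$ with $\max(n+1,\mu_1) \le p \le 2n$, writing $\xi(\mu) = \sum c_j' \eps_j$, we have $c_1' \ge c_2' \ge \cdots \ge c_{2n-p+1}'$ and $c_1' - c_{2n-p+1}' \le 1$.} The case $\mu_1 \le n+1$ is handled by plugging in the explicit form $\xi(\mu) = s(\eps_1 + \cdots + \eps_n) + (\eps_1 + \cdots + \eps_r)$ from Lemma~\ref{lem:translation_n+1}(1), from which the conclusion follows by inspection. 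For $\mu_1 \ge n+2$, the cyclic formula above combined with the recursion expresses the coefficients $c_j$ of $\xi(\mu)$ in terms of the coefficients $c_j'$ of $\xi(\mu^-)$ by $c_1 = 1 + c'_{2n-\mu_1+1}$, $c_j = c'_{j-1}$ for $2 \le j \le 2n-\mu_1+1$, and $c_j = c_j'$ otherwise; the inductive hypothesis applied to $\mu^-$ with the same parameter $p$ (valid since $\mu_2 \le \mu_1 \le p$) then yields both the monotonicity and the bound for $\xi(\mu)$.

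The main obstacle is identifying the correct strengthening of the target conclusion that propagates through the induction: the naive bound $c_1' - c_n' \le 1$ fails when $\mu_1 \ge n+2$ (as it can happen that $c_n' = 0$ while $c_1'$ grows arbitrarily), whereas the $p$-indexed refinement above survives both cases of the inductive step, and exactly the specialization $p = \lambda_1$ is what is needed to conclude.
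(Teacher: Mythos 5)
Your proof is correct and follows essentially the same route as the paper's: the paper likewise computes the action of $v_{\lambda_1}^{-1}$ (its case formula for $v_{\lambda_1}^{-1}(\eps_1+\cdots+\eps_i)$ is exactly your cyclic rotation read against sums of consecutive $\eps_j$'s) and runs an induction establishing a strengthened invariant on $\xi(\lambda)$, namely $\xi(\lambda)=2s\vpi_n^\vee+\vpi_{i_1}^\vee+\cdots+\vpi_{i_t}^\vee$ with $1\le i_1\le\cdots\le i_t\le n-1$ and $i_2>2n-\lambda_1$, which is literally equivalent to your statement that the coefficient vector of $\xi(\lambda)$ is non-increasing with total drop at most $1$ on the window of length $2n-\lambda_1+1$. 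One small repair is needed in your inductive step for the pair $(\mu,p)$ with $\mu_1\ge n+2$: you invoke the hypothesis for $(\mu^-,p)$, but when $p>\mu_1$ that instance only controls $c'_1,\ldots,c'_{2n-p+1}$ and says nothing about $c'_{2n-\mu_1+1}$, which is precisely the coefficient entering $c_1=1+c'_{2n-\mu_1+1}$, so the inequality $c_1\ge c_2$ does not follow from it. You should instead use the instance $(\mu^-,\mu_1)$, which is available since your claim is quantified over all admissible $p$ (equivalently, it suffices to prove the claim for the minimal admissible $p=\max(n+1,\mu_1)$, since for larger $p$ the window only shrinks and the endpoint coefficient only increases); with that substitution the induction closes and the argument is complete.
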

\begin{proof}
    Let $a$ be the maximal positive integer such that $\lambda_{a} \ge n+2.$ 
Let $s,r \in \BZ$ be such that $l-a =sn+r$ such that $0\le r \le n-1$. Define
\begin{equation}
l_i=2n-\la_i.
\end{equation} 
We will show the following 
by induction on $a$:
\begin{enumerate}[{\upshape {(}i{)}}]
\item There exists $t\ge 1$ and $1 \le i_{1} \le \cdots \le i_{t} \le n-1$ such that \begin{equation}
    \xi(\lambda) = 2s\vpi_{n}^{\vee} + \vpi_{i_{1}}^{\vee} + \cdots + \vpi_{i_{t}}^{\vee}.
\end{equation} 
\item If $t \ge 2$, then $i_2>l_{1}$. 
\item $\xi(\lambda) - \xi(\lambda^{-}) = \eps_{i_{1}}$. 
\end{enumerate}
Then the lemma follows. 

It is straightforward to show that
for $1\le i\le n,$ 
\begin{equation}
v_{\lambda_{1}}^{-1} (\eps_{1} + \cdots + \eps_{i}) = \begin{cases} \eps_{2} + \cdots + \eps_{i+1} & \text{if } i\le l_{1}, \\ \eps_{1} + \cdots + \eps_{i} & \text{if } i> l_{1}. \end{cases}
\label{eq:veps}
\end{equation}

We first assume $a = 1$. Then 
$\ell(\lambda^{-})=l-1=sn+r$.
By Lemma~\ref{lem:translation_n+1} applied to $\la^{-}$, we have $\xi(\lambda^{-}) = 2s\vpi_{n}^{\vee} + \vpi_{r}^{\vee}$. 
Hence 
we have 
\begin{align}
\xi(\lambda) &=
\eps_1 + v_{\la_1}^{-1}(\xi(\la^{-}))\\
&= \eps_1 + v_{\lambda_{1}}^{-1}(2s \vpi_{n}^{\vee} + \vpi_{r}^{\vee} )\\ 
&= \begin{cases} \eps_1 + 2s\vpi_{n}^{\vee} + (\eps_{2} + \cdots + \eps_{r+1}) & \text{if } r\le l_{1}  \\ \eps_1 + 2s\vpi_{n}^{\vee} + (\eps_{1} + \cdots + \eps_{r}) & \text{if } r>l_{1}  \end{cases} \\ 
&= \begin{cases} 2s\vpi_{n}^{\vee} + \vpi_{r+1}^{\vee} & \text{if } r\le l_{1}  \\ 2s\vpi_{n}^{\vee} + \vpi_{1}^{\vee} + \vpi_{r}^{\vee} & \text{if } r>l_{1} 
\end{cases}
\label{eq:zeta1}
\end{align}
by \eqref{eq:veps}, 
and therefore
\begin{equation}
\xi(\la)-\xi(\la^{-})=
\begin{cases}
\eps_{r+1} &\text{if $r\le l_1$}\\
\eps_1 &\text{if $r>l_1$}.
\end{cases}
\end{equation}
Note if $r\le l_1$
 then $r+1\le n-1$, because $\lambda_1\ge n+2.$
Thus we have (i) and (iii).
Regarding (ii), we only need to consider the case when 
$r>l_1$. Then the required inequality in (ii) is nothing but $r>l_1$.

Now assume $a\ge 2$.
By inductive hypothesis,
we can write for some $t\ge 1$, $\xi(\lambda^{-}) = 2s\vpi_{n}^{\vee} + \vpi_{j_{1}}^{\vee} + \cdots + \vpi_{j_{t}}^{\vee}$ 
with $1 \le j_{1} \le \cdots \le j_{t} \le n-1$, so that $j_2>l_{2} $ if $t \ge 2$
(note that the first component of $\lambda^{-}$
is $\lambda_{2}$). 

If $t\ge 2$, 
we claim that for $i\ge 2$,
\begin{equation}
v_{\lambda_{1}}^{-1} \vpi_{j_{i}}^{\vee}
= \vpi_{j_{i}}^{\vee}. \label{eq:kge2}
\end{equation} 
In fact, we have $l_{1} \le l_2< j_{2} \le j_{i}$,
therefore by the second case of \eqref{eq:veps}, we have \eqref{eq:kge2}.

Thus, by using \eqref{eq:veps} and \eqref{eq:kge2}, we obtain 
\begin{align}
\xi(\lambda) & = \eps_{1} + v_{\lambda_{1}}^{-1}(2s\vpiv_n+ \vpi_{j_{1}}^{\vee} + \cdots + \vpi_{j_{t}}^{\vee})\\
&=\begin{cases} \eps_{1} + 2s\vpi_{n}^{\vee} + (\eps_{2} + \cdots + \eps_{j_{1}+1}) + \vpi_{j_{2}}^{\vee} + \cdots + \vpi_{j_{t}}^{\vee} & \text{if $j_1\le l_{1}$} \\ \eps_{1} + 2s\vpi_{n}^{\vee} + (\eps_{1} + \cdots + \eps_{j_{1}}) + \vpi_{j_{2}}^{\vee} + \cdots + \vpi_{j_{t}}^{\vee} & \text{if $j_1>l_{1}$}  \end{cases} \\ 
&= \begin{cases} 2s\vpi_{n}^{\vee} + \vpi_{j_{1}+1}^{\vee} + \vpi_{j_{2}}^{\vee} + \cdots + \vpi_{j_{t}}^{\vee} & \text{if $j_1\le l_{1}$}  \\ 2s\vpi_{n}^{\vee} + \vpi_{1}^{\vee} + \vpi_{j_{1}}^{\vee} + \vpi_{j_{2}}^{\vee} + \cdots + \vpi_{j_{t}}^{\vee} & \text{if $j_1>l_{1}$}, \end{cases}
\label{eq:zeta2}
\end{align}
and hence
\begin{equation}
    \xi(\lambda) - \xi(\lambda^{-}) = \begin{cases}
        \eps_{j_{1}+1} & \text{if $j_1\le l_{1}$} \\ 
        \eps_{1} & \text{if $j_1>l_{1}$}.
    \end{cases}\label{eq:diffxi}
\end{equation}
Note that \eqref{eq:diffxi}
holds also in the case $t=1$ by the same computation.

If $j_1\le l_{1} $, then 
we have $j_1+1\le j_2$ because $l_1\le l_2$
and $l_2< j_2.$
Therefore  (i) is satisfied. By \eqref{eq:diffxi}, (iii) also holds. In this case, condition (ii) also holds because $l_{1}\le l_2<j_2$. If $j_1>l_{1}$, then (i) and (iii) are obviously satisfied. In this case, $j_1>l_{1}$ is exactly the condition (ii). This completes the proof. 
\end{proof}

\begin{proof}[Proof of Proposition \ref{prop:translation_explicit}] Note that for $1\le i\le n$,
\begin{equation}  \eps_i=\alpha_i^{\vee}+\cdots+\alpha_n^{\vee}
\equiv \alpha_n^\vee \mod Q_P^\vee.
\label{eq:epsi_mod}
\end{equation}
Therefore from Lemma~\ref{lem:translation_n+1}\,(2) and Lemma~\ref{lem:translation_n+2}, we have
\begin{equation}
[\xi(\lambda)]-[\xi(\lambda^{-})]
=\alpha_n^\vee.
\end{equation}
We use this equation 
repeatedly to obtain the proposition.
\end{proof}

\subsection{\texorpdfstring{Determination of $\mcr{v(\lambda)}$ for $\la\in \SP(n)$}{Determination of [v(lambda)] for lambda in SP(n)}} 
\label{sec:partition_correspondence}
The Weyl group is $W=\langle s_1,\ldots,s_{n}\rangle.$
Consider the parabolic
subgroup $W_{P}=\langle s_1,\ldots,s_{n-1}\rangle$ of $W$. Let $W^P$ be the set of minimal length coset representatives for $W/W_P$.

There is a bijection $\SP(n) \rightarrow W^{P}$,
$\lambda\mapsto u_\lambda$, given as follows. 
For $1 \le k \le n$, we set 
\begin{equation}
    u_{k} := s_{n+1-k} \cdots s_{n-1} s_{n}. 
\end{equation}
Then one can check that $u_{k}\in W^P.$ More generally, 
for any strict partition $\lambda = (\lambda_{1}, \ldots, \lambda_{l}) \in \SP(n)$, we define $u_{\lambda} \in W$ by 
\begin{equation}
    u_{\lambda} := u_{\lambda_{l}} \cdots u_{\lambda_{1}}. 
\end{equation}
Then we have
\begin{equation}
W^P=\{u_\la\mid \la\in \SP(n)\}.
\end{equation}

For $1 \le k \le n$, set $k^* := n+1-k$. 
Then for $\lambda = (\lambda_{1}, \ldots, \lambda_{l}) \in \SP(n)$, we define $\lambda^* \in \SP(n)$ by 
\begin{equation}
    \lambda^* := (\lambda_{l}^*, \ldots, \lambda_{1}^*). 
\end{equation}
In this section, we show the following proposition. 
\begin{prop} \label{prop:partition_correspondence}
    For $\lambda \in \SP(n)$, we have $\mcr{v(\lambda)} = u_{\lambda^*}$. 
\end{prop}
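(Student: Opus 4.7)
\medskip

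The plan is to prove the equality $\mcr{v(\lambda)} = u_{\lambda^*}$ by comparing the actions of $v(\lambda)$ and $u_{\lambda^*}$ on the cominuscule fundamental coweight $\vpiv_n$, and invoking the fact that $W_{P} = \langle s_1,\ldots,s_{n-1}\rangle$ is precisely the stabilizer of $\vpiv_n$ in $W$ (it acts as the symmetric group $S_n$ on $\{\eps_1,\ldots,\eps_n\}$). Thus showing $v(\lambda)\vpiv_n = u_{\lambda^*}\vpiv_n$ is equivalent to $v(\lambda)W_P = u_{\lambda^*}W_P$, which together with $u_{\lambda^*}\in W^P$ yields the claim.

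First, I would carry out two direct calculations, analogous in style to those already performed in the proof of Lemma \ref{lem:translation_n+1}. For the single block $v_k$ (with $1\le k\le n$), tracing $s_k\cdots s_{n-1}s_n s_{n-1}\cdots s_1$ on the basis $\eps_j$ yields $v_k(\eps_1)=-\eps_k$, $v_k(\eps_j)=\eps_{j-1}$ for $2\le j\le k$, and $v_k(\eps_j)=\eps_j$ for $k+1\le j\le n$; summing gives $v_k\vpiv_n = \vpiv_n - \eps_k$. For $u_k = s_{n+1-k}\cdots s_{n-1}s_n$, a parallel trace gives $u_k(\eps_n)=-\eps_{k^*}$, $u_k(\eps_j)=\eps_{j+1}$ for $k^*\le j\le n-1$, and $u_k(\eps_j)=\eps_j$ for $j<k^*$; hence $u_k\vpiv_n = \vpiv_n - \eps_{k^*}$.

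Next, I would prove by induction on $\ell(\lambda)$ that
\begin{equation*}
v(\lambda)\vpiv_n = \vpiv_n - \sum_{i=1}^{\ell(\lambda)}\eps_{\lambda_i} = u_{\lambda^*}\vpiv_n.
\end{equation*}
For the $v$-side, I write $v(\lambda) = v(\lambda^-)v_{\lambda_1}$, so $v(\lambda)\vpiv_n = v(\lambda^-)\vpiv_n - v(\lambda^-)\eps_{\lambda_1}$. The crucial point is that, since $\lambda\in\SP(n)$ is strict, $\lambda_1>\lambda_j$ for all $j\ge 2$, so each factor $v_{\lambda_j}$ in $v(\lambda^-)=v_{\lambda_l}\cdots v_{\lambda_2}$ fixes $\eps_{\lambda_1}$ (we are in the ``fixed range'' $\{\lambda_j+1,\ldots,n\}$ above); hence $v(\lambda^-)\eps_{\lambda_1}=\eps_{\lambda_1}$, and the induction closes. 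For the $u$-side, I write $u_{\lambda^*} = u_{\lambda_1^*}u_{(\lambda^-)^*}$ and again use strictness: since $\lambda_i<\lambda_1=(\lambda_1^*)^*$ for $i\ge 2$, the factor $u_{\lambda_1^*}$ fixes each $\eps_{\lambda_i}$ appearing in $u_{(\lambda^-)^*}\vpiv_n$ by the inductive hypothesis, giving the matching identity.

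The main obstacle is purely bookkeeping: one must carefully verify the case analysis for the action of $v_k$ (respectively $u_k$) on the $\eps_j$, and keep track of which indices lie in the ``fixed range'' of the subsequent factor. Once this is done, the strictness of $\lambda$ makes the induction proceed cleanly, and no deeper structural input beyond the stabilizer statement for $W_P$ is needed.
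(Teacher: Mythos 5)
Your proof is correct, but it takes a genuinely different route from the paper. The paper proves the stronger group-theoretic identity $v(\lambda) = u_{\lambda^*}v_{n+1}^{\ell(\lambda)}$ in $W$ by an induction driven by two commutation lemmas ($u_{i^*}u_{j^*} = u_{(j+1)^*}u_{i^*}s_{n-1}$ and $v_{n+1}u_{i^*} = u_{(i-1)^*}v_{n+2}$, together with $v_i = u_{i^*}v_{n+1}$ and $s_{n-1}v_{n+1}=v_{n+1}$), and then reads off the coset since $v_{n+1}\in W_P$. You instead identify only the coset $v(\lambda)W_P$ by computing the action on the cominuscule coweight $\vpiv_n$, using that $W_P=\langle s_1,\ldots,s_{n-1}\rangle$ is exactly $\mathrm{Stab}_W(\vpiv_n)$ (true: an element of the signed permutation group fixes $\frac{1}{2}\sum_i\eps_i$ iff it involves no sign changes). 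I checked your two block computations, $v_k\vpiv_n=\vpiv_n-\eps_k$ and $u_k\vpiv_n=\vpiv_n-\eps_{k^*}$, and they are right; the strictness of $\lambda$ does ensure that $\eps_{\lambda_1}$ lies in the fixed range of every later factor, so both inductions close and give $v(\lambda)\vpiv_n=u_{\lambda^*}\vpiv_n=\vpiv_n-\sum_i\eps_{\lambda_i}$, whence $\mcr{v(\lambda)}=\mcr{u_{\lambda^*}}=u_{\lambda^*}$ because $u_{\lambda^*}\in W^P$. Your argument is more elementary and robust — it replaces delicate braid manipulations by a linear computation on $X^\vee$, and it exploits the (co)minuscule nature of the node $n$ in type $C$; the trade-off is that the paper's approach yields the finer factorization $v(\lambda)=u_{\lambda^*}v_{n+1}^{\ell(\lambda)}$ rather than just the coset. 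When writing this up, do state explicitly that $\mathrm{Stab}_W(\vpiv_n)=W_P$ (e.g.\ as the standard fact that the stabilizer of a dominant element is generated by the simple reflections fixing it), since it is the one structural input beyond bookkeeping.
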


In order to prove Proposition~\ref{prop:partition_correspondence}, we prepare the following lemmas which can be proved in a straightforward manner.  
\begin{lem} \label{lem:commutation_1}
    For $1 \le i \le j \le n-1$, we have 
    \begin{equation}
        u_{i^*}u_{j^*} = u_{(j+1)^*}u_{i^*}s_{n-1}. 
    \end{equation}
\end{lem}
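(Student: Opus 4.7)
The plan is to prove the identity $u_{i^*} u_{j^*} = u_{(j+1)^*} u_{i^*} s_{n-1}$ by a double induction, relying only on the braid relations listed in Section~\ref{sec:0-Grassmannian} and the commutativity of simple reflections whose indices differ by at least $2$. Expanding the definition, $u_{k^*} = s_{k} s_{k+1} \cdots s_{n}$ for $1 \le k \le n$, so the claim is a purely combinatorial word identity in $W$. The outer induction is on $j - i$, reducing to the diagonal case $i = j$; the inner induction (for $i = j$) is on $n - i$, whose base case $i = n-1$ is a single type-$C$ braid relation.

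For the outer step, assume $i < j$, so $i + 1 \le j$. Write $u_{i^*} = s_{i} \cdot u_{(i+1)^*}$ and apply the inductive hypothesis to the pair $(i+1, j)$:
\begin{equation}
u_{i^*} u_{j^*} = s_{i} \, u_{(i+1)^*} u_{j^*} = s_{i} \cdot u_{(j+1)^*} u_{(i+1)^*} s_{n-1}.
\end{equation}
Since $i + 1 \le j$, every index $m$ appearing in $u_{(j+1)^*} = s_{j+1} \cdots s_{n}$ satisfies $|i - m| \ge 2$, so $s_{i}$ commutes with the whole word $u_{(j+1)^*}$; pulling $s_{i}$ through yields $u_{(j+1)^*} \cdot s_{i} u_{(i+1)^*} \cdot s_{n-1} = u_{(j+1)^*} u_{i^*} s_{n-1}$, as required.

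For the diagonal case $i = j$, the base $i = n-1$ reduces directly to the relation $s_{n-1} s_{n} s_{n-1} s_{n} = s_{n} s_{n-1} s_{n} s_{n-1}$. For $1 \le i \le n-2$, the strategy is to rewrite both sides so that a common left factor $s_{i+1} s_{i}$ appears. On the left, expand
\begin{equation}
u_{i^*}^2 = s_{i} s_{i+1} (s_{i+2} \cdots s_{n}) \, s_{i} s_{i+1} (s_{i+2} \cdots s_{n}),
\end{equation}
slide the middle $s_{i}$ leftward past the commuting block $s_{i+2} \cdots s_{n}$, and apply the short braid $s_{i} s_{i+1} s_{i} = s_{i+1} s_{i} s_{i+1}$. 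On the right, use the same commutations to pull the lone $s_{i}$ in $u_{(i+1)^*} u_{i^*} s_{n-1}$ past $s_{i+2} \cdots s_{n}$, exposing the prefix $s_{i+1} s_{i}$. Cancelling this common prefix reduces the identity for $(i,i)$ to the identity for $(i+1, i+1)$, closing the inner induction.

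The argument is entirely formal manipulation in the braid presentation of $W$, so the principal obstacle is not conceptual but notational: one must verify at each step that the index ranges permit the relevant relation, which is precisely why $i = n-1$ must be separated as the base (there the short braid at position $i$ is unavailable, and the longer type-$C$ braid at position $n-1$ takes over).
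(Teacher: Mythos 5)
Your proof is correct. The paper itself gives no argument for this lemma (it is stated as one of two lemmas "which can be proved in a straightforward manner"), and your double induction supplies valid details: the off-diagonal step uses only that $s_i$ commutes with every letter of $u_{(j+1)^*}=s_{j+1}\cdots s_n$ (which needs $j\ge i+1$, exactly the hypothesis of that step), and the diagonal step correctly peels off the common left factor $s_{i+1}s_i$ to reduce $(i,i)$ to $(i+1,i+1)$, terminating at the genuine type-$C$ braid relation $s_{n-1}s_ns_{n-1}s_n=s_ns_{n-1}s_ns_{n-1}$.
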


Recall that $v_i$'s are defined by \eqref{eq:defvi}. 
\begin{lem} \label{lem:commutation_2}
    For $2 \le i \le n$, we have 
    \begin{equation}
        v_{n+1}u_{i^*} = u_{(i-1)^*}v_{n+2}. 
    \end{equation}
\end{lem}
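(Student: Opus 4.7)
The plan is to prove the identity by downward induction on $i$, starting from the base case $i = n$ and using the key auxiliary commutation identity
\begin{equation}
v_{n+1} s_i = s_{i-1} v_{n+1} \quad \text{for } 2 \le i \le n-1.
\end{equation}
For the base case $i = n$ we have $u_{n^*} = s_n$ and $u_{(n-1)^*} = s_{n-1} s_n$; since $s_n$ commutes with $s_1, \ldots, s_{n-2}$, I would move $s_n$ leftward through the tail of $v_{n+1}$ to obtain
\begin{equation}
v_{n+1} u_{n^*} = (s_{n-1} s_{n-2} \cdots s_1) s_n = s_{n-1} s_n (s_{n-2} \cdots s_1) = u_{(n-1)^*} v_{n+2}.
\end{equation}

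To prove the auxiliary identity for $2 \le i \le n-1$, I would use the decomposition $v_{n+1} = (s_{n-1} \cdots s_{i+1}) \, s_i \, s_{i-1} \, (s_{i-2} \cdots s_1)$, so that
\begin{equation}
v_{n+1} s_i = (s_{n-1} \cdots s_{i+1}) \, s_i \, s_{i-1} \, (s_{i-2} \cdots s_1) \, s_i.
\end{equation}
The plan is to commute the rightmost $s_i$ leftward past $(s_{i-2} \cdots s_1)$ (the indices differ by at least two), apply the braid relation $s_i s_{i-1} s_i = s_{i-1} s_i s_{i-1}$, and then commute the resulting leftmost $s_{i-1}$ further left past $(s_{n-1} \cdots s_{i+1})$ (again the indices differ by at least two); the remaining factors then recombine exactly into $s_{i-1} v_{n+1}$. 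The argument remains valid in the boundary cases $i = 2$ and $i = n-1$, where one of the two flanking sub-products is empty but the required pairwise commutations still hold.

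For the inductive step, suppose $v_{n+1} u_{(i+1)^*} = u_{i^*} v_{n+2}$ for some $2 \le i \le n-1$. From the definition of $u_k$ we have immediately $u_{i^*} = s_i u_{(i+1)^*}$ and $u_{(i-1)^*} = s_{i-1} u_{i^*}$, and hence
\begin{equation}
v_{n+1} u_{i^*} = (v_{n+1} s_i)\, u_{(i+1)^*} = s_{i-1} (v_{n+1} u_{(i+1)^*}) = s_{i-1} u_{i^*} v_{n+2} = u_{(i-1)^*} v_{n+2},
\end{equation}
completing the induction. The whole argument is a purely formal Coxeter-group manipulation; the only step that requires any real attention is the auxiliary identity, where one must verify that every commutation invoked is legitimate. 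This reduces to the two observations that $s_{i-1}$ commutes with $s_k$ for $k \ge i+1$ and that $s_i$ commutes with $s_k$ for $k \le i-2$, together with a single application of the braid relation $s_i s_{i-1} s_i = s_{i-1} s_i s_{i-1}$. No relations involving $s_0$ or the long braid $s_{n-1} s_n s_{n-1} s_n = s_n s_{n-1} s_n s_{n-1}$ enter the proof.
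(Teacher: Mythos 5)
Your proof is correct. The paper states Lemma~\ref{lem:commutation_2} without proof (it merely remarks that the lemmas of that subsection ``can be proved in a straightforward manner''), so there is no argument in the paper to compare against; your write-up supplies the missing verification. Unwinding the definitions, the claim is
\begin{equation}
(s_{n-1}\cdots s_1)(s_i s_{i+1}\cdots s_n) = (s_{i-1}s_i\cdots s_n)(s_{n-2}\cdots s_1),
\end{equation}
and your downward induction from $i=n$, driven by the conjugation identity $v_{n+1}s_i = s_{i-1}v_{n+1}$ for $2\le i\le n-1$, is a clean way to establish it. I checked each ingredient: the base case uses only that $s_n$ commutes with $s_1,\ldots,s_{n-2}$; the auxiliary identity uses only type-$A$ commutations and the braid relation $s_is_{i-1}s_i=s_{i-1}s_is_{i-1}$, which is valid in $W=\langle s_1,\ldots,s_n\rangle$ for $2\le i\le n-1$; and the recursions $u_{i^*}=s_iu_{(i+1)^*}$ and $u_{(i-1)^*}=s_{i-1}u_{i^*}$ follow directly from $u_k=s_{n+1-k}\cdots s_{n-1}s_n$. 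The degenerate cases ($i=2$, $i=n-1$, and $n=2$, where $v_{n+2}=v_{2n}=1$) all collapse correctly as you indicate, and your observation that neither $s_0$ nor the long braid relation is needed is accurate.
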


\begin{proof}[Proof of Proposition~\ref{prop:partition_correspondence}]
    First, note that for $1 \le i \le n$, we have 
    \begin{equation}
        v_{i} = u_{i^*}v_{n+1}. \label{eq:vi}
    \end{equation}
    
    In order to prove the proposition, it suffices to show $v(\lambda) = u_{\lambda^*}v_{n+1}^{l}$
    because $v_{n+1} \in W_{P}$. 
    We use induction on $l = \ell(\lambda)$ to show this. 
    If $l = 1$, then we have 
    \begin{equation}
        v(\lambda) = v_{\lambda_{1}} = u_{\lambda_{1}^*}v_{n+1} = u_{\lambda^*}v_{n+1}, 
    \end{equation}
    as desired; here we used Proposition~\ref{prop:finite_part}
    in the first equality,
    and \eqref{eq:vi}
   in the second equality.

     Let $l > 1$ and assume that the proposition holds the elements in $\SP(n)$ such that the length is less than $l$. Let 
     $\lambda = (\lambda_{1}, \ldots, \lambda_{l}) \in \SP(n)$ be such that $\ell(\lambda) = l$. 
     Set $\mu := (\lambda_{1}, \ldots, \lambda_{l-1})$.
    Then 
    \begin{align}
        v(\lambda) 
        &= v_{\lambda_{l}} v(\mu) &\\ 
        &= (u_{\lambda_{l}^*}v_{n+1})(u_{\mu^*}v_{n+1}^{l-1}) & & \text{by \eqref{eq:vi} and induction}\\ 
        &= u_{\lambda_{l}^*}v_{n+1}u_{\lambda_{1}^*}u_{\lambda_{2}^*} \cdots u_{\lambda_{l-1}^*}v_{n+1}^{l-1} \\ 
        &= u_{\lambda_{l}^*}u_{(\lambda_{1}-1)^*}v_{n+2}u_{\lambda_{2}^*} \cdots u_{\lambda_{l-1}^*}v_{n+1}^{l-1} 
        & & \text{by Lemma \ref{lem:commutation_2}}\\ 
        &= u_{\lambda_{1}^*}u_{\lambda_{l}^*}s_{n-1}v_{n+2}u_{\lambda_{2}^*} \cdots u_{\lambda_{l-1}^*}v_{n+1}^{l-1} & & \text{by Lemma \ref{lem:commutation_1}}\\ 
        &= u_{\lambda_{1}^*}u_{\lambda_{l}^*}v_{n+1}u_{\lambda_{2}^*} \cdots u_{\lambda_{l-1}^*}v_{n+1}^{l-1}
        & & \text{since $s_{n-1}v_{n+1}=v_{n+1}$}\\ 
        &= \cdots \\ 
        &= u_{\lambda_{1}^*}u_{\lambda_{2}^*} \cdots u_{\lambda_{l}^*} v_{n+1}^{l} \\ 
        &= u_{\lambda^*}v_{n+1}^{l}, 
    \end{align}
    as desired.
\end{proof}


\section{Proof of the main result} \label{sec:proof}
Recall that $K_*^T(\GrC)$ is an $R(T)$-module
with basis $\{\CO_\la^{\Gr} \mid \la\in \mathscr{P}^n_C\},$
and $\Pet: K_*^T(\GrC)_{\loc} \rightarrow
QK_T^{\poly}(\LG)_{\loc}$ is the map
given in Theorem  \ref{thm:parabolic_peterson_surj}. 
In this section, we set $Q = Q_{n}$. 

\begin{lem}\label{lem:Pet n+1} We have
  \begin{equation}
\Pet(\CO^\Gr_{(n+1)})=Q^{-1}.
\label{eq:Phi(n+1)}
\end{equation}  
\end{lem}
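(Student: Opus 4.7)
The plan is to unwind the definition of the affine element $x_{(n+1)}\in W_\af^0$ and plug into the Peterson formula of Theorem~\ref{thm:parabolic_peterson_surj}. Since $\lambda = (n+1)$ has length one, the preliminary results of Section~\ref{sec:0-Grassmannian} apply in their simplest form, and both the $W$-part and the translation part of $x_{(n+1)}$ admit a clean description.

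First I would invoke Proposition~\ref{prop:finite_part} with $l=1$ to obtain $x_{(n+1)} = v_{n+1}\,t_{-\eps_1}$. By the definition \eqref{eq:defvi} of $v_i$ this gives $v_{n+1} = s_{n-1}s_{n-2}\cdots s_1$. Since $J = I\setminus\{n\}$, every simple reflection appearing here lies in $W_P = \langle s_1,\ldots,s_{n-1}\rangle$, so $v_{n+1}\in W_P$ and therefore $\mcr{v_{n+1}} = e$ in $W^P$. In particular, $\CO^{\mcr{v_{n+1}}}_{\LG} = \CO^e_{\LG} = 1$.

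Next, for the translation part, Proposition~\ref{prop:translation_explicit} with $\ell(\lambda)=1$ (or directly the identity $\eps_1 = \alv_1 + \cdots + \alv_n$, which projects to $\alv_n$ modulo $Q_P^\vee$) gives $[\xi((n+1))] = \alv_n$, hence $[-\eps_1] = -\alv_n$. Plugging this and $\mcr{v_{n+1}}=e$ into the Peterson formula of Theorem~\ref{thm:parabolic_peterson_surj}, after multiplying by $\CO^{\Gr}_{t_\gamma}(\CO^{\Gr}_{t_\gamma})^{-1}$ for some sufficiently anti-dominant $\gamma\in Q^\vee_{<0}$ to bring $\CO^{\Gr}_{(n+1)}$ into the form $\CO^{\Gr}_{v_{n+1}\,t_{-\eps_1+\gamma}}(\CO^{\Gr}_{t_\gamma})^{-1}$ via the translation product rule, yields
\begin{equation}
\Pet(\CO^\Gr_{(n+1)}) \;=\; Q^{[-\eps_1]}\cdot \CO^e_{\LG} \;=\; Q^{-\alv_n} \;=\; Q_n^{-1} \;=\; Q^{-1},
\end{equation}
using the convention $Q = Q_n$ fixed at the start of this section.

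I do not anticipate any real obstacle: the lemma is essentially the $l=1$ case of Propositions~\ref{prop:finite_part} and~\ref{prop:translation_explicit} combined with the observation that $v_{n+1}$ is a parabolic element. The one point that requires care is that the Peterson formula is stated on expressions of the form $\CO^{\Gr}_{wt_\xi}(\CO^{\Gr}_{t_\gamma})^{-1}$ rather than on bare Schubert classes, but this is bridged by the standard translation product rule $\CO^{\Gr}_{wt_\xi}\CO^{\Gr}_{t_\gamma} = \CO^{\Gr}_{wt_{\xi+\gamma}}$ for sufficiently anti-dominant $\gamma$, which is what makes the map in Theorem~\ref{thm:parabolic_peterson_surj} well-defined in the first place.
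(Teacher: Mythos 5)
Your proposal is correct and follows essentially the same route as the paper: apply Proposition~\ref{prop:finite_part} with $l=1$ to get $x_{(n+1)}=v_{n+1}t_{-\eps_1}$, observe $v_{n+1}\in W_P$ so $\mcr{v_{n+1}}=1$, and use $\eps_1\equiv\alv_n\bmod Q_P^\vee$ to conclude $Q^{[-\eps_1]}=Q^{-1}$. Your extra remark on reducing the bare class $\CO^{\Gr}_{(n+1)}$ to the form $\CO^{\Gr}_{wt_\xi}(\CO^{\Gr}_{t_\gamma})^{-1}$ via the translation product rule is a valid elaboration of a step the paper leaves implicit.
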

\begin{proof} By
Proposition \ref{prop:finite_part}, 
$x_{(n+1)}=v_{n+1}t_{-\eps_1}$.
Because $\mcr{v_{n+1}}=1$ and
$\eps_1\equiv \alpha_n^\vee\mod Q_P^\vee$ (cf. \eqref{eq:epsi_mod}),
we have
\begin{equation}
\Pet(\CO^\Gr_{(n+1)})
=Q^{[-\eps_1]}
\CO_{\LGrm}^{\mcr{v_{n+1}}}
=Q^{-1}.
\end{equation}
\end{proof}

\begin{lem}\label{lem:main}
If $\lambda \in \PC$ satisfies $\lambda_{1} \ge n+1$, then 
\begin{equation}
\CO^\Gr_{\lambda} - \CO^\Gr_{(n+1)} \CO^\Gr_{\lambda^{-}} \in \mathrm{Ker}(\Pet). 
\end{equation}
\end{lem}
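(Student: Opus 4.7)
The plan is to exploit that $\Pet$ is a ring homomorphism and compute both $\Pet(\CO^\Gr_\lambda)$ and $\Pet(\CO^\Gr_{(n+1)}\CO^\Gr_{\lambda^-}) = \Pet(\CO^\Gr_{(n+1)})\cdot\Pet(\CO^\Gr_{\lambda^-})$ directly, using the explicit description of $\Pet$ from Theorem \ref{thm:parabolic_peterson_surj} together with the combinatorial data assembled in Section \ref{sec:0-Grassmannian}. With the normalization from the proof of Lemma \ref{lem:Pet n+1}, one has $\Pet(\CO^\Gr_\mu)=Q^{[-\xi(\mu)]}\CO^{\mcr{v(\mu)}}_{\LGrm}$ for every $\mu\in\PC$, where $\xi(\mu)$ and $v(\mu)$ are given by Proposition \ref{prop:finite_part}.

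The first key step is evaluating the $Q$-exponent via Proposition \ref{prop:translation_explicit}: since $I\setminus J=\{n\}$, $Q=Q_n$, and $[\xi(\mu)]=\ell(\mu)\,\alpha_n^\vee$, one gets $Q^{[-\xi(\mu)]}=Q^{-\ell(\mu)}$. Combined with Lemma \ref{lem:Pet n+1}, this yields
\begin{align}
\Pet(\CO^\Gr_\lambda) &= Q^{-\ell(\lambda)}\CO^{\mcr{v(\lambda)}}_{\LGrm}, \\
\Pet(\CO^\Gr_{(n+1)})\cdot\Pet(\CO^\Gr_{\lambda^-}) &= Q^{-1}\cdot Q^{-\ell(\lambda^-)}\CO^{\mcr{v(\lambda^-)}}_{\LGrm} = Q^{-\ell(\lambda)}\CO^{\mcr{v(\lambda^-)}}_{\LGrm},
\end{align}
so the problem reduces to verifying $\mcr{v(\lambda)}=\mcr{v(\lambda^-)}$, i.e. $v(\lambda)W_P=v(\lambda^-)W_P$.

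This identity is where the hypothesis $\lambda_1\ge n+1$ enters. By Proposition \ref{prop:finite_part}, $v(\lambda)=v(\lambda^-)v_{\lambda_1}$, so the goal becomes $v_{\lambda_1}\in W_P=\langle s_1,\ldots,s_{n-1}\rangle$. Inspecting the definition \eqref{eq:defvi}: for $n+1\le\lambda_1\le 2n-1$ one has $v_{\lambda_1}=s_{2n-\lambda_1}\cdots s_2 s_1$ with all indices bounded by $2n-\lambda_1\le n-1$, while $v_{2n}=1$. In either case $v_{\lambda_1}\in W_P$, which gives the required coset equality. Putting everything together, $\Pet(\CO^\Gr_\lambda - \CO^\Gr_{(n+1)}\CO^\Gr_{\lambda^-})=0$. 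There is no serious obstacle; the whole argument is a short assembly of the preparatory lemmas, and the only point requiring any observation is the elementary index bound showing $v_{\lambda_1}\in W_P$ precisely when $\lambda_1\ge n+1$.
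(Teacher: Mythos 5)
Your proof is correct and follows essentially the same route as the paper: reduce to $\mcr{v(\lambda)}=\mcr{v(\lambda^-)}$ via the factorization $v(\lambda)=v(\lambda^-)v_{\lambda_1}$ and the $Q$-exponent computation from Proposition~\ref{prop:translation_explicit}, then observe $v_{\lambda_1}\in W_P$ when $\lambda_1\ge n+1$. The only difference is that you spell out the index bound $2n-\lambda_1\le n-1$ justifying $v_{\lambda_1}\in W_P$, which the paper merely asserts.
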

\begin{proof}
We first note that $v_{\la_1}\in W_{P} $ because $\la_1\ge n+1$. Therefore we have  
$v(\lambda) = v(\lambda^{-})v_{\la_1}\in 
v(\lambda^{-})W_{P},$
which implies \begin{equation}\mcr{v(\lambda)}=\mcr{v(\lambda^{-})}.\label{eq:mcr}
\end{equation}
By Proposition \ref{prop:translation_explicit}, we have
\begin{equation}
Q^{[-\xi(\la)]}=Q^{-1}Q^{[-\xi(\la^{-})]}.
\end{equation}
Thus it follows that
\begin{equation}
\Pet(\CO^\Gr_{\lambda}) 
 = Q^{[-\xi(\lambda)]} \CO_{\LGrm}^{\mcr{v(\lambda)}} 
= Q^{-1} Q^{[-\xi(\lambda^{-})]} \CO_{\LGrm}^{\mcr{v(\lambda^{-})}} 
=Q^{-1}\Pet(\CO^\Gr_{\lambda^{-}}).
\end{equation}
On the other hand, from Lemma \ref{lem:Pet n+1},
\begin{align}
\Pet(\CO^\Gr_{(n+1)}\cdot \CO^\Gr_{\lambda^{-}})
=\Pet(\CO^\Gr_{(n+1)})\Pet(\CO^\Gr_{\lambda^{-}})
=Q^{-1}\Pet(\CO^\Gr_{\lambda^{-}}).
\label{eq:lambda}
\end{align}
Therefore the Lemma holds.
\end{proof}

\begin{proof}[Proof of Theorem~\ref{thm:A}]
By Lemma~\ref{lem:main}, the composition 
\begin{equation}
\KGrC\hookrightarrow
\KGrC_\loc
\xrightarrow{\Phi_{P}} 
QK_{T}^{\poly}(\LG)_\loc
\end{equation}
induces
an $R(T)$-algebra homomorphism
\begin{equation}
    \mapname{\overline{\Phi}_P}\map{\KGrC/\JLG}{QK_{T}^{\poly}(\LG)_\loc}{\CO_{\lambda}^{\Gr}}{Q^{-[\xi(\lambda)]}\CO_{\LGrm}^{\mcr{v(\lambda)}};}
\end{equation}
recall that $Q^{-[\xi(\lambda)]} = Q^{-\ell(\lambda)}$ by Proposition~\ref{prop:translation_explicit}.
Since $\overline{\Phi}_{P}(\CO_{(n+1)}^{\Gr}) = Q^{-1}$ (by equation~\eqref{eq:Phi(n+1)}) is invertible in $QK_{T}^{\poly}(\LG)_\loc$, $\overline{\Phi}_P$ induces an $R(T)$-algebra homomorphism
\begin{equation}
    \mapname{\overline{\Phi}_{P,\loc}} \map{(\KGrC/\JLG)_\loc}{QK_{T}^{\poly}(\LG)_\loc}{\CO_{\lambda}^{\Gr}(\CO_{(n+1)}^{\Gr})^{-k}}{Q^{k}Q^{-\ell(\lambda)}\CO_{\LGrm}^{\mcr{v(\lambda)}}},
\end{equation}
for $k\in \BZ.$
Moreover, $\overline{\Phi}_{P}(\CO_{(n+1)}^{\Gr}) = Q^{-1}$ implies that $\overline{\Phi}_{P, \loc}$ is an $R(T)[Q, Q^{-1}]$-algebra homomorphism. 

We prove bijectivity of $\overline{\Phi}_{P, \loc}$. 
By the explicit form 
of $\JLG$, one sees that 
$(\KGrC/J_{\LG})_{\loc}$, 
as an 
$R(T)[Q,Q^{-1}]$-module, is generated 
by $\CO_{\lambda}^\Gr$ with $\la \in \SP(n)\subset \PC$. 
Take $\lambda \in \SP(n)$. 
By Proposition~\ref{prop:partition_correspondence}, we have $\CO_{\LGrm}^{\mcr{v(\lambda)}} = \CO_{\LGrm}^{\lambda^*}$. 
Hence 
\begin{equation}
    \overline{\Phi}_{P,\loc}(\CO_{\lambda}^{\Gr}(\CO_{(n+1)}^{\Gr})^{-\ell(\lambda)}) = Q^{\ell(\lambda)} Q^{-\ell(\lambda)} \CO^{\mcr{v(\lambda)}}_{\LGrm} = \CO^{\lambda^*}_{\LGrm}. 
\end{equation}
Recall that the set $\{\CO_{\LGrm}^{\lambda} \mid \lambda \in \SP(n) \}$ is a basis of $QK_{T}^{\poly}(\LG)_{\loc}$ as an $R(T)[Q,Q^{-1}]$-module. 
Noting that the assignment $\lambda \mapsto \lambda^*$ defines a bijection $\SP(n) \rightarrow \SP(n)$, 
we conclude that $\overline{\Phi}_{P, \loc}$ is bijective. 
\end{proof}


\section{Limit to homology}
\label{sec:quantum_cohomology}
In this section, we discuss
the limit of our construction to
equivariant homology case.
As a biproduct, we give an explicit 
description of the set of Peterson's coset representatives for $\LG$.
\subsection{\texorpdfstring{Filtration on $K_*^T(\Gr_G)$}{Filtration on K*T(GrG)}}
We regard the ring 
${H_T^*(pt)}=\mathbb{C}[\eps_1,\ldots,\eps_n]$
as a graded ring. 
Let $\hat{S}=\mathbb{C}\bra{\eps_1,\ldots,\eps_n}$
be the completion of $S$ 
by the ideal $\mathfrak{m}=(\eps_1,\ldots,\eps_n).$
The ring $\hat{S}$ is a filtered ring equipped with a
decreasing filtration 
$\hat{S}\supset \mathfrak{m}\supset \mathfrak{m}^2\supset \cdots.$
The associated graded ring $\mathrm{gr}(\hat{S})$ is naturally isomorphic to $S.$
We identify $R(T)$ as
a subring of $\hat{S}$ by sending $e^{\eps_i}$ to 
$\sum_{n = 0}^{\infty} \eps_{i}^{n}/n! \in \hat{S}$. 

Define $\hat{K}_*^T(\Gr_G)=\hat{S}\otimes_{R(T)}
K_*^T(\Gr_G).$
We regard the ring $\hat{K}_*^T(\Gr_G)$
as a filtered 
$\hat{S}$-algebra 
with a filtration
\[
F_0\hat{K}_*^T(\Gr_G)
\subset 
F_1\hat{K}_*^T(\Gr_G)
\subset
\cdots
\subset \hat{K}_*^T(\Gr_G)
,
\]
\[
F_i\hat{K}_*^T(\Gr_G)=
\sum_{\substack{w\in W_\af^0\\\ell(w)\le i}}
\hat{S}\CO_w^\Gr
+\sum_{\substack{w\in W_\af^0\\\ell(w)> i}}
\mathfrak{m}^{\ell(w)-i}\CO_w^\Gr.
\]
The associated graded ring
\[
\mathrm{gr}_F\left(\hat{K}_*^T(\Gr_G)\right)
=\bigoplus_{i\ge 0}
F_i\hat{K}_*^T(\Gr_G)/F_{i-1}\hat{K}_*^T(\Gr_G)
\]
is isomorphic, as a graded $S$-algebra, to $H_*^T(\Gr_G)$.
For $\la\in \PC$ with $|\la|=i,$
the image of 
$\CO^\Gr_\la\in F_i\hat{K}_*^T(\Gr_G)$
is $\sigma_\la^\Gr\in H_{2i}^T(\Gr_G).$

Let $\hat{J}_{\LG}
$
be an ideal
of $\hat{K}_*^T(\Gr_G)$ 
corresponding to \eqref{eq:def_JLG}.
The quotient ring 
$\hat{K}_*^T(\Gr_G)/\hat{J}_{\LG}$ inherits a 
filtration.
The associated graded ring
is given by 
\begin{align}
&\mathrm{gr}_F(\hat{K}_*^T(\Gr_G)/\hat{J}_{\LG})\\
 &=\bigoplus_{i\ge 0}
 F_i\left(\hat{K}_*^T(\Gr_G)/\hat{J}_{\LG}\right)/F_{i-1}\left(\hat{K}_*^T(\Gr_G)/\hat{J}_{\LG}\right)\\
&\cong \bigoplus_{i\ge 0}
F_{i}\hat{K}_*^T(\Gr_G)
/\left(F_{i}\hat{K}_*^T(\Gr_G)
\cap \hat{J}_{\LG}+F_{i-1}\hat{K}_*^T(\Gr_G)\right).
\end{align}

Recall that ${J^{\hom}_{\LG}}$ is 
the ideal of 
$H_*^T(\Gr_G)$ given
as the $S$-span of 
$\sigma_\la^\Gr$
such that $\la\in\PC$ and 
$\la_1\ge n+2.$

\begin{prop}\label{prop:gr-K-to-H}
We have an isomorphism
of graded $S$-algebras
\begin{equation}
\mathrm{gr}_F\left
(\hat{K}_*^T(\Gr_G)/\hat{J}_{\LG}
\right)
\cong
H_*^T(\Gr_G)
/{J^{\hom}_{\LG}}.\label{eq:grK}
\end{equation} 
For $\la\in \PC,$ with $|\la|=i,$ the class in  $F_i(\hat{K}_*^T(\Gr_G))$ determined by 
$\CO^\Gr_\la$ 
is identified with $\sigma_\la^\Gr \mod J_{\LG}^{\mathrm{hom}}\in H_i^{T}(\Gr_G)/J_{\LG}^{\mathrm{hom}}$. 
\end{prop}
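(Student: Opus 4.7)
The plan is to identify the symbol ideal $\mathrm{gr}_F(\hat{J}_{\LG})$ inside $\mathrm{gr}_F(\hat{K}_*^T(\Gr_G)) \cong H_*^T(\Gr_G)$ with $J_{\LG}^{\mathrm{hom}}$; the proposition will then follow from the general principle $\mathrm{gr}_F(A/I) \cong \mathrm{gr}_F(A)/\mathrm{gr}_F(I)$, together with the correspondence $[\CO_\lambda^\Gr] \leftrightarrow \sigma_\lambda^\Gr$ recorded in the setup. Equivalently, I will produce a natural graded $S$-algebra map $\phi\colon H_*^T(\Gr_G)/J_{\LG}^{\mathrm{hom}} \to \mathrm{gr}_F(\hat{K}_*^T(\Gr_G)/\hat{J}_{\LG})$ sending $\sigma_\lambda^\Gr \mapsto [\CO_\lambda^\Gr]$ (for $\lambda_1 \le n+1$), check well-definedness and the ring-hom property, and verify bijectivity.

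For the inclusion $J_{\LG}^{\mathrm{hom}} \subseteq \mathrm{gr}_F(\hat{J}_{\LG})$, I use a direct degree count: for $\lambda \in \PC$ with $\lambda_1 \ge n+2$, multiplicativity of the filtration places the product $\CO_{(n+1)}^\Gr \CO_{\lambda^-}^\Gr$ in $F_{(n+1)+|\lambda^-|} = F_{|\lambda|-(\lambda_1-n-1)} \subseteq F_{|\lambda|-1}$, so the defining generator $g_\lambda := \CO_\lambda^\Gr - \CO_{(n+1)}^\Gr \CO_{\lambda^-}^\Gr$ of $\hat{J}_{\LG}$ has symbol exactly $\sigma_\lambda^\Gr$ at filtration degree $|\lambda|$. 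This simultaneously verifies the well-definedness of $\phi$ on $J_{\LG}^{\mathrm{hom}}$ and shows surjectivity, since every Schubert symbol indexing the associated graded either vanishes (for $\lambda_1 \ge n+2$) or lies in the image of $\phi$ (for $\lambda_1 \le n+1$).

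The reverse inclusion $\mathrm{gr}_F(\hat{J}_{\LG}) \subseteq J_{\LG}^{\mathrm{hom}}$ is the substantive step. Since $\hat{J}_{\LG}$ is generated as an ideal by the $g_\lambda$ with $\lambda_1 \ge n+1$, it suffices to verify that each generator's symbol lies in $J_{\LG}^{\mathrm{hom}}$. The case $\lambda_1 \ge n+2$ is handled above; the crux is $\lambda_1 = n+1$, where the symbol becomes $\sigma_\lambda^\Gr - \sigma_{(n+1)}^\Gr \sigma_{\lambda^-}^\Gr$, and this difference must lie in $J_{\LG}^{\mathrm{hom}}$. Equivalently, in the Pieri-type expansion of $\sigma_{(n+1)}^\Gr \cdot \sigma_{\lambda^-}^\Gr$ inside $H_*^T(\Gr_G)$, the only term $\sigma_\mu^\Gr$ with $\mu_1 \le n+1$ must be $\sigma_\lambda^\Gr$ itself, appearing with coefficient $1$. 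A cleaner packaging of the same content is to verify that $\hat{K}_*^T(\Gr_G)/\hat{J}_{\LG}$ is a filtered free $\hat{S}$-module with basis $\{\CO_\lambda^\Gr : \lambda \in \PC, \lambda_1 \le n+1\}$: spanning follows from iteratively applying $\CO_\lambda^\Gr \equiv \CO_{(n+1)}^\Gr \CO_{\lambda^-}^\Gr$ with the filtration controlling the induction, while linear independence follows by pushing the proposed basis into the localized quotient $(\hat{K}_*^T(\Gr_G)/\hat{J}_{\LG})_{\loc} \cong QK_T^{\poly}(\LG)_{\loc}$ via Theorem~\ref{thm:A}, where the images $Q^{-\ell(\lambda)}\CO_{\LGrm}^{(\lambda^{\le n})^*}$ are $R(T)$-independent.

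The main obstacle is controlling the passage from the K-theoretic relations inside $\hat{K}_*^T(\Gr_G)/\hat{J}_{\LG}$ to the corresponding identities modulo $J_{\LG}^{\mathrm{hom}}$ in $H_*^T(\Gr_G)$. One must either establish the Pieri-type claim for $\sigma_{(n+1)}^\Gr$ directly (a combinatorial result for affine Schubert calculus in type $C$), or ensure that the localization map $\hat{K}_*^T(\Gr_G)/\hat{J}_{\LG} \to (\hat{K}_*^T(\Gr_G)/\hat{J}_{\LG})_{\loc}$ is injective on the proposed $\hat{S}$-basis — the latter reduces to checking that $\CO_{(n+1)}^\Gr$ acts without torsion. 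Care must also be taken with the logical dependency: the statement is used elsewhere in Section~\ref{sec:quantum_cohomology} to deduce Corollary~\ref{cor:homology} and Corollary~\ref{cor:Petco}, so the present proof must be carried out without invoking those.
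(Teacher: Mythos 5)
Your argument is essentially the paper's own: the paper likewise kills $\sigma_\la^{\Gr}$ for $\la_1\ge n+2$ by the same degree count (writing $\CO_\la^{\Gr}=g_\la+\CO_{(n+1)}^{\Gr}\CO_{\la^{-}}^{\Gr}$ with the product lying in $F_{|\la|-1}$), and handles the surviving classes by asserting that $\hat{K}_*^T(\Gr_G)/\hat{J}_{\LG}$ is $\hat{S}$-free on the classes $\overline{\CO_\la^{\Gr}}$ with $\la_1\le n+1$ (written there as $\overline{\CO_\mu^{\Gr}}\,(\overline{\CO_{(n+1)}^{\Gr}})^a$ with $\mu\in\SP(n)$) --- exactly your ``cleaner packaging,'' with the paper phrasing the whole thing as computing $\kernel(\varphi_{\LG})$ rather than the symbol ideal. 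One caution on your first formulation of the reverse inclusion: the claim that ``since $\hat{J}_{\LG}$ is generated by the $g_\la$, it suffices to verify that each generator's symbol lies in $\JLGh$'' is not a valid principle --- the associated graded of an ideal is in general strictly larger than the ideal generated by the symbols of a chosen generating set (this is precisely the standard-basis issue), so the Pieri-type computation for $\la_1=n+1$ would not by itself close the argument. Your filtered-freeness route does close it, and your method for the linear independence (pushing the proposed basis into $(\hat{K}_*^T(\Gr_G)/\hat{J}_{\LG})_{\loc}\cong \hat S\otimes_{R(T)}QK_T^{\poly}(\LG)_{\loc}$ via Theorem~\ref{thm:A}, where the images $Q^{-\ell(\la)}\CO_{\LGrm}^{(\la^{\le n})^*}$ are manifestly independent over $\hat S[Q,Q^{-1}]$) is a legitimate way to substantiate what the paper dismisses as ``easy to see''; note that for this direction you only need that a relation in the quotient maps to a relation in the localization, not injectivity of the localization map.
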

\begin{proof}
Consider the homomorphism of 
graded $S$-algebras:
\[
\varphi_{\LG}:
H_*^T(\Gr_G)
\xrightarrow{\sim}
\mathrm{gr}_F\left
(\hat{K}_*^T(\Gr_G)
\right)
\rightarrow\mathrm{gr}_F\left
(\hat{K}_*^T(\Gr_G)/\hat{J}_{\LG}
\right)
\]
Clearly $\varphi_{\LG}$ is surjective. We will show that 
$\kernel(\varphi_{\LG})=J^{\hom}_{\LG}.$

For $\la\in \PC$ with $|\la|=i$, 
we denote the image of
$\CO^\Gr_\la$
under the map
\[
F_i
\hat{K}_*^T(\Gr_G)\rightarrow
F_i\left(\hat{K}_*^T(\Gr_G)/\hat{J}_{\LG}\right)
\]
by $\pi_i(\CO^\Gr_\la).$
If 
$\la_1\ge n+2,$
we have
\begin{align}
\CO^\Gr_\la&=
(\CO^\Gr_\la-
\CO^\Gr_{(n+1)}
\CO^\Gr_{\la^{-}})
+\CO^\Gr_{(n+1)}
\CO^\Gr_{\la^{-}}\\
&\in (F_{i}\hat{K}_*^T(\Gr_G))
\cap \hat{J}_{\LG}
+F_{i-1}\hat{K}_*^T(\Gr_G).
\end{align}
Thus, in this case, we have 
$\pi_i(\CO^\Gr_\la)=0
$ and therefore $\varphi_{\LG}(\sigma_\la^\Gr)=0.$

Next suppose $\la_1\le n+1$.
We write $\la=((n+1)^a,\la^{\le n })$, then 
\begin{equation}
\CO^\Gr_\la\equiv (\CO_{(n+1)}^\Gr)^a\CO^\Gr_{\la^{\le n }}
\mod \hat{J}_{\LG}.
\end{equation}
We claim that
$\pi_i(\CO_\la^\Gr)$ is not zero. In fact, it is easy to see
\[
\hat{K}_*^T(\Gr_G)/\hat{J}_{\LG}
\cong
\bigoplus_{i\ge 0,\; \la\in \SP(n)}
\hat{S}\,\overline{\CO^\Gr_\la} \cdot (\overline{\CO^\Gr_{(n+1)}})^i.
\]
Then the claim follows.
Hence 
$\sigma_\la^\Gr$ is not contained in $ 
\kernel(\varphi_{\LG}).$ 
The second statement is obvious.
\end{proof}

\begin{proof}[Proof of Corollary \ref{cor:homology}]
The equivariant quantum $K$-ring 
$QK_T(G/P)$ has a natural decreasing filtration
\[
QK_T(G/P)=
F^0QK_T(G/P)
\supset F^1QK_T(G/P)\supset\cdots
\]
 and its associated graded ring is
isomorphic, as an $H_T^*(pt)[q]$-algebra, to $QH_T^*(G/P)
$ (see for example \cite[\S 8]{GMSZ}).
The isomorphism of Theorem \ref{thm:A} induces a degree reversing isomorphism in the Corollary \ref{cor:homology}. 
By the proof of Proposition \ref{prop:gr-K-to-H}, we have \eqref{eq:homology-map}. 
\end{proof}

 \subsection{Peterson coset representatives}\label{ssec:Pet}
Let $L_P$ be the Levi part of $P.$
Let $W_P,R_P,Q_P^\vee,$ be 
the the Weyl group, 
the root system,   
and the coroot lattice
of $L_P$
respectively. 
Let $(W_P)_\af=W_P\ltimes Q_P^\vee$. 
There is a distinguished 
subset $(W^P)_\af\subset W_\af$
such that the map
$(W^P)_\af \times (W_P)_\af
\rightarrow 
W_\af, (w_1,w_2)\mapsto w_1w_2$ is bijective. 
We call $(W^P)_\af$ 
the set of Peterson's coset representatives.
We refer to \cite[\S 10.3]{LS} for the basic properties of $(W^P)_\af.$
Let $R_P^+$ be the positive system of $R_P.$
An element 
$x=wt_\xi\in W_\af\; (w\in W, \xi\in Q^\vee)$ is
in $(W^P)_\af$
is and only if
\[
\langle \xi,\alpha\rangle=\begin{cases}
0 & \text{if $w(\alpha)>0$}\\
-1 & \text{if $w(\alpha)<0$}
\end{cases}
\]
for all $\alpha\in R_P^+$ (\cite[Lemma 10.2]{LS}).
Let
\[
J_P^{\mathrm{Pet}}
=\bigoplus_{x\in W_\af^0\setminus (W^P)_\af}
S \sigma_{x}^\Gr.
\]

Let us denote the ideal $J^{\mathrm{Pet}}_{P}$
for $G/P=\LG$
by 
$J^{\mathrm{Pet}}_{\LG}.$
Then
the parabolic Peterson isomorphism 
\cite[Theorem 10.21]{LS} reads
\begin{equation}
 \left(H_*^T(\Gr_G)
/J_{\mathrm{LG}(n)}^{\mathrm{Pet}}\right)_\loc 
\cong QH_T^*(\LG)_\loc
\label{eq:hom_Para_Pet}
\end{equation}
 in our case.
\begin{rem}\label{rem:inj}
As a consequence of \eqref{eq:hom_Para_Pet}, we can regard
$ \left(H_*^T(\Gr_G)
/J_{\mathrm{LG}(n)}^{\mathrm{Pet}}\right)_\loc$
as a free $\BZ[q,q^{-1}]$-module
such that $q^{-1}$ acts as 
multiplication by $\sigma_{(n+1)}^\Gr.$
In particular, 
the natural map
\[H_*^T(\Gr_G)
/J_{\mathrm{LG}(n)}^{\mathrm{Pet}}
\rightarrow
\left(H_*^T(\Gr_G)
/J_{\mathrm{LG}(n)}^{\mathrm{Pet}}\right)_\loc
\]
is injective.
From Corollary \ref{cor:homology},
the similar statement holds
for $J_{\mathrm{LG}(n)}^{\mathrm{hom}}$
instead of $J_{\mathrm{LG}(n)}^{\mathrm{Pet}}$. 
\end{rem}
\begin{cor}\label{cor:Petco} 
Via the bijection
$W_\af^0\cong \PC$, the subset 
$W_\af^0\cap (W^P)_\af$ corresponds to the 
set consisting of $\la\in\PC$ such that 
$\la_1\le n+1.$
\end{cor}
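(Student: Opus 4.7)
The plan is to show that the two ideals $J^{\mathrm{Pet}}_{\LG}$ and $\JLGh$ in $H_*^T(\GrC)$ coincide. Once this equality is established, the Corollary follows immediately: both ideals are $S$-spans over their defining Schubert basis subsets, so equating the ideals equates the indexing subsets, i.e., $W_\af^0\setminus(W^P)_\af$ corresponds under $W_\af^0\cong\PC$ to $\{\la\in\PC:\la_1\ge n+2\}$, which is precisely the statement of the Corollary.

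The first and main step is to prove the containment $J^{\mathrm{Pet}}_{\LG}\subseteq\JLGh$, equivalently $\la_1\le n+1\Rightarrow x_\la\in(W^P)_\af$, by direct verification of the Lam--Shimozono criterion \cite[Lemma~10.2]{LS} for $x_\la=v(\la)t_{-\xi(\la)}$. In type $C_n$ we have $R_P^+=\{\eps_i-\eps_j:1\le i<j\le n\}$, and Lemma~\ref{lem:translation_n+1} gives $\xi(\la)=s(\eps_1+\cdots+\eps_n)+(\eps_1+\cdots+\eps_r)$ with $\ell(\la)=sn+r$ and $0\le r\le n-1$, so $\langle -\xi(\la),\eps_i-\eps_j\rangle$ equals $-1$ iff $i\le r<j$ and $0$ otherwise. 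The criterion thus reduces to showing $v(\la)(\eps_i-\eps_j)<0$ iff $i\le r<j$. I would use the decomposition $v(\la)=u_{(\la^{\le n})^*}v_{n+1}^{\ell(\la)}$, which extends the proof of Proposition~\ref{prop:partition_correspondence} once one observes $v_{n+1}\in W_P$, together with the cyclic action $v_{n+1}(\eps_k)=\eps_{k-1}$ (with $\eps_1\mapsto\eps_n$). The shift $v_{n+1}^{\ell(\la)}$ bijects $\{\eps_1,\ldots,\eps_r\}$ onto $\{\eps_{n-r+1},\ldots,\eps_n\}$ and $\{\eps_{r+1},\ldots,\eps_n\}$ onto $\{\eps_1,\ldots,\eps_{n-r}\}$ while preserving relative order inside each block, and the subsequent $u_{(\la^{\le n})^*}$ keeps these two image blocks separated with the first below the second in the canonical order on $\{\pm\eps_k\}$ induced by the $C_n$-positive roots.

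Granted this containment, I obtain a surjection $\pi\colon H_*^T(\GrC)/J^{\mathrm{Pet}}_{\LG}\twoheadrightarrow H_*^T(\GrC)/\JLGh$. Localizing at $\sigma_{(n+1)}^{\Gr}$ and identifying both localizations with $QH_T^*(\LG)_\loc$ via the Lam--Shimozono isomorphism $\psi$ of \cite[Theorem~10.21]{LS} and the isomorphism $\phi$ of Corollary~\ref{cor:homology}, I verify compatibility $\phi\circ\pi_\loc=\psi$ on Schubert basis $\sigma_\la^{\Gr}\bmod J^{\mathrm{Pet}}_{\LG}$ with $x_\la\in(W^P)_\af$: both sides evaluate to $q^{-\ell(\la)}\sigma_{\LGrm}^{u_{(\la^{\le n})^*}}$ using Propositions~\ref{prop:translation_explicit} and \ref{prop:partition_correspondence}, while classes lying in $J^{\mathrm{Pet}}_{\LG}$ vanish on both sides. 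Thus $\pi_\loc$ is an isomorphism, and combining with the injectivity of $H_*^T(\GrC)/J\hookrightarrow(H_*^T(\GrC)/J)_\loc$ for both choices of $J$ (Remark~\ref{rem:inj}), a standard diagram chase forces $\pi$ itself to be an isomorphism, so $J^{\mathrm{Pet}}_{\LG}=\JLGh$.

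The main obstacle is the first step: although Lemma~\ref{lem:translation_n+1} supplies $\xi(\la)$ in closed form, matching the $0/{-}1$ sign pattern against the action of the signed permutation $u_{(\la^{\le n})^*}v_{n+1}^{\ell(\la)}$ on $\eps_i-\eps_j$ requires a careful combinatorial analysis, especially in controlling how $u_{(\la^{\le n})^*}$ may send cyclic-boundary elements to their negatives while preserving the two-block separation demanded by Lam--Shimozono.
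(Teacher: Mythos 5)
Your Step~1 is correct, and in fact easier than you fear. Since $u_{(\la^{\le n})^*}\in W^P$, it maps $R_P^+$ into the positive roots of $C_n$ and $-R_P^+$ into the negative ones; so once you have computed (via your two-block description of $v_{n+1}^{\ell(\la)}$) that $v_{n+1}^{\ell(\la)}(\eps_i-\eps_j)$ lies in $-R_P^+$ exactly when $i\le r<j$ and in $R_P^+$ otherwise, no further analysis of sign changes or ``cyclic-boundary elements'' under $u_{(\la^{\le n})^*}$ is needed: the sign of $v(\la)(\eps_i-\eps_j)$ already matches the pattern of $\langle-\xi(\la),\eps_i-\eps_j\rangle$, and the Lam--Shimozono criterion follows. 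Note that this entire verification is something the paper avoids: its proof of the corollary never tests the criterion on a single element, but instead observes that for every $\la\in\PC$ the images of $\sigma_\la^{\Gr}$ in $(H_*^T(\GrC)/J^{\mathrm{Pet}}_{\LG})_\loc$ and in $(H_*^T(\GrC)/\JLGh)_\loc$ are simultaneously zero or nonzero, because both quotients inject into their localizations (Remark~\ref{rem:inj}) and the two localizations are identified with $QH_T^*(\LG)_\loc$ compatibly on Schubert classes.

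The genuine gap is in your Step~2: the asserted compatibility $\phi\circ\pi_\loc=\psi$ cannot be ``verified'' on the basis $\{\sigma_\la^{\Gr}\bmod J^{\mathrm{Pet}}_{\LG} : x_\la\in(W^P)_\af\}$ as you describe. For such $\la$ one always has $\psi(\sigma_\la^{\Gr}\bmod J^{\mathrm{Pet}}_{\LG})\ne 0$ (it is the image of a basis element under an injective map), whereas $\phi(\pi_\loc(\sigma_\la^{\Gr}\bmod J^{\mathrm{Pet}}_{\LG}))$ equals $q^{-\ell(\la)}\sigma_{\LGrm}^{(\la^{\le n})^*}$ only when $\la_1\le n+1$ and equals $0$ when $\la_1\ge n+2$, by the formula of Corollary~\ref{cor:homology}. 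Hence claiming that ``both sides evaluate to $q^{-\ell(\la)}\sigma_{\LGrm}^{u_{(\la^{\le n})^*}}$'' presupposes that no $\la$ with $x_\la\in(W^P)_\af$ has $\la_1\ge n+2$ --- which is precisely the reverse inclusion you are trying to extract. The step is circular as written, but easily repaired without any compatibility claim: both $(H_*^T(\GrC)/J^{\mathrm{Pet}}_{\LG})_\loc$ and $(H_*^T(\GrC)/\JLGh)_\loc$ are free $H_T^*(pt)[q,q^{-1}]$-modules of the same finite rank $2^n=|\SP(n)|$, being isomorphic to $QH_T^*(\LG)_\loc$ via $\psi$ and $\phi$ respectively, and a surjection between finitely generated free modules of equal rank over a commutative ring is an isomorphism. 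Your diagram chase with Remark~\ref{rem:inj} then yields $J^{\mathrm{Pet}}_{\LG}=\JLGh$ and hence the corollary. (Alternatively, drop Step~1 entirely and run the paper's symmetric nonvanishing argument, which needs no computation with the criterion at all.)
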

\begin{proof}
For $\la\in \PC$, let 
$\alpha_\lambda$ and $\beta_\la$ denote
the images of 
$\sigma_\la^\Gr$
in $\left(H_*^T(\Gr_G)
/J_{\mathrm{LG}(n)}^{\mathrm{Pet}}\right)_\loc$
and $\left(H_*^T(\Gr_G)
/J_{\mathrm{LG}(n)}^{\mathrm{hom}}\right)_\loc$
respectively. 
 By Remark \ref{rem:inj}, we have  
\[
\alpha_\la \ne 0
\Longleftrightarrow
x_\la\in (W^P)_\af.
\]
Similarly, we have
\[
\beta_\la \ne 0
\Longleftrightarrow
\la_1\le n+1.
\]
Note that 
$\alpha_\lambda$ and $\beta_\la$
 correspond to the same element of 
 $QH_T^*(\LG)_\loc$ via
 \eqref{eq:hom_Para_Pet} and 
 Corollary \ref{cor:homology} respectively.
Hence 
the corollary holds.
\end{proof}

\end{document}